\numberwithin{equation}{section}
\theoremstyle{plain}
\newtheorem{theorem}{Theorem}[section]
\newtheorem{lemma}[theorem]{Lemma}
\newtheorem{corollary}[theorem]{Corollary}}
\theoremstyle{remark}   
\newtheorem{example}[theorem]{Example}}
\title{On integral mixed Cayley graphs over non-abelian finite groups admitting an abelian subgroup of index 2}
\author{Angelot Behajaina}
\email{angelot.behajaina@universite-paris-saclay.fr}
\address{Universit\'e Paris-Saclay, CNRS, Laboratoire de Math\'ematiques d'Orsay, 91405 Orsay, France}
\author{Fran\c cois Legrand}
\email{francois.legrand@unicaen.fr}
\address{Normandie Univ., UNICAEN, CNRS, Laboratoire de Math\'ematiques Nicolas Oresme, 14000 Caen, France}
\subjclass[2010]{05C25, 05C50}
\keywords{Cayley graphs, integral graphs} 
\begin{document}

\maketitle

\vspace{-12mm}

\begin{abstract}
Recently, several works by a number of authors have provided characterizations of integral undirected Cayley graphs over generalized dihedral groups and generalized dicyclic groups. We generalize and unify these results in two different ways. Firstly, we work over arbitrary non-abelian finite groups admitting an abelian subgroup of index 2. Secondly, our main result actually characterizes integral mixed Cayley graphs over such finite groups, in the spirit of a very recent result of Kadyan--Bhattarcharjya in the abelian case.
\end{abstract}

\section{Introduction} \label{sec:intro}

In this note, we consider {\it{mixed graphs}} with no loop and no multiple edge, i.e., couples $(V,E)$, where $V$ is a non-empty finite set and $E$ is a subset of $(V \times V) \setminus \{(v,v) \, : \, v \in V\}$ (see \S\ref{sec:prelim} for the basic terminology used in the sequel). A mixed graph is {\it{integral}} if the eigenvalues of its Hermitian adjacency matrix, a matrix introduced independently by Liu--Li and Guo--Mohar in \cite{LL15} and \cite{GM17}, respectively, are all in $\mathbb{Z}$. The aim of the present note is to contribute to the study of the integrality of a special class of mixed graphs, namely mixed Cayley graphs. For a subset $S$ of a finite group $G$ with $1 \not \in S$, by the {\it{mixed Cayley graph}} over $G$ with respect to $S$, we mean the mixed graph $(G, \{(g,h) \in G^2 : g^{-1} h \in S\})$, denoted by ${\rm{Cay}}(G,S)$.

Motivated by works of Harary--Schwenk \cite{HS74} and {Ahmady--Alon--Blake--Shparlinski} \cite{AABS09}, many authors studied the integrality of {\it{undirected Cayley graphs}}, i.e., of mixed Cayley graphs ${\rm{Cay}}(G,S)$ which assume $S^{-1}=S$ (for results on the integrality of other kinds of mixed graphs, see, e.g., \cite{BC76, Wat79, WS79} and, for a survey of some of the known results on eigenvalues of mixed Cayley graphs, see \cite{LZ22}). For $G$ abelian, se\-veral characterizations of integrality were obtained by Bridges--Mena \cite{BM82}, Klotz--Sander \cite{KS10}, and Alperin--Peterson \cite{AP12}. Moreover, Lu--Huang--Huang focused on the case where $G$ is dihedral in \cite{LHH18}, and their results were extended to genera\-lized dihedral groups by Huang--Li in \cite{HL21}. Furthermore, Cheng--Feng--Huang considered in \cite{CFH19} the case where $G$ is a dicyclic group, and their results were extended to generalized dicyclic groups in our previous work \cite{BL22}. 

However, the integrality of mixed Cayley graphs which are not necessarily undirected seems to be a very recent topic, and we are aware of only one result of Kadyan--Bhattarcharjya \cite{KB21}, who characterized integral mixed Cayley graphs over abelian groups. Let us say that their characterization was extended by Huang--Lu--M\"onius in \cite{HLM22}, who determined the splitting field of the characteristic polynomial of the Hermitian adjacency matrix of any mixed Cayley graph in the abelian case.

Such characterizations of integrality usually involve the classification of the (complex) irreducible re\-pre\-sentations of the underlying finite group $G$ and, via a result of Babai \cite{Bab79}, are derived from computing the eigenvalues of some endomorphisms of some vector spaces whose dimensions are those of the irreducible representations of $G$. After abelian groups, it is then natural to study the case where $G$ is non-abelian and where every irreducible representation of $G$ has dimension at most 2. By a result of Amitsur \cite[Theorem 3]{Ami61}, the latter can happen only in the next two situations: {\rm{(1)}} $G$ has an abelian subgroup of index 2, and {\rm{(2)}} $G/Z(G) \cong (\mathbb{Z}/2\mathbb{Z})^3$.

Here we work over arbitrary non-abelian finite groups $G$ as in (1), and our main result (The\-o\-rem \ref{thm:main}) characterizes integral mixed Cayley graphs over such finite groups $G$, in terms of the irreducible representations of the underlying finite group. In particular, Theorem \ref{thm:main} generalizes and unifies various results from the works \cite{LHH18, CFH19, HL21, BL22} quoted above in two different ways. As a first step, we classify all ir\-re\-ducible representations (up to equivalence) of any of our finite groups $G$ (see \S\ref{sssec:main_1}).

Theorem \ref{thm:main} has several consequences. In the undirected case, our main result takes a simple form (see Corollary \ref{coro:undirected}) and we derive a convenient sufficient condition, involving the Boolean algebra of the underlying abelian subgroups, for an undirected Cayley graph over any of our groups $G$ to be integral (see Corollary \ref{coro:simple}). But Theorem \ref{thm:main} also applies to non-necessarily undirected Cayley graphs. For example, in the case of generalized dihedral/dicyclic groups, our main result also takes a simple form (see Corollary \ref{coro:s=-1}) and, in particular, it yields the first integrality results for non-necessarily undirected Cayley graphs in the non-abelian case (see also Corollaries \ref{coro:1} and \ref{coro:2} for results devoted to the directed case). Our results are illustrated by several explicit examples (see Examples \ref{ex:1}, \ref{ex:1.5} and \ref{ex:2}).

\section{Preliminaries} \label{sec:prelim}

We collect the basic material on representations of finite groups and graphs that will be used in the sequel. For a subset $S$ of a finite group $G$ and a non-zero integer $n$, we set $S^n = \{s^n \, : \, s \in S\}$.

\subsection{Representations of finite groups} \label{ssec:prelim_1}

A (complex) {\it{representation}} of a given finite group $G$ is a group homomorphism $\rho : G \rightarrow {\rm{GL}}(V)$, where $V$ is a finite dimensional complex vector space. The {\it{dimension}} ${\rm{dim}}(\rho)$ of $\rho$ is the dimension of $V$ and the {\it{character}} of $\rho$ is the map $\chi_\rho : G \rightarrow \mathbb{C}$ defined by $\chi_\rho(g) = {\rm{Tr}}(\rho(g))$ for $g \in G$.

A subspace $W$ of $V$ is {\it{invariant}} under $\rho$ if $\rho(g)(W) = W$ for $g \in G$, and $\rho$ is {\it{irreducible}} if $V \not=\{0\}$ and if $\{0\},V$ are the only invariant subspaces of $V$. Letting $L^2(G)$ be the vector space of all functions $f : G \rightarrow \mathbb{C}$, together with the Hermitian inner product $( \cdot | \cdot)$ given by
\begin{equation} \label{eq:scalar}
(f_1 | f_2) = \frac{1}{|G|} \sum_{g \in G} f_1(g) \overline{f_2(g)}
\end{equation}
for $f_1, f_2 \in L^2(G)$, the representation $\rho$ is irreducible if and only if $(\chi_\rho| \chi_\rho)=1$. 

Two representations $\rho_1 : G \rightarrow {\rm{GL}}(V_1)$ and $\rho_2 : G \rightarrow {\rm{GL}}(V_2)$ of $G$ are {\it{equivalent}} if there is an isomorphism $T : V_1 \rightarrow V_2$ such that, for every $g \in G$, we have $T \circ \rho_1(g) = \rho_2(g) \circ T$. Classically, $\rho_1$ and $\rho_2$ are equivalent if and only if $\chi_{\rho_1} = \chi_{\rho_2}$. The group $G$ has only finitely many inequivalent irreducible representations, and their number equals the number of conjugacy classes of $G$. Moreover, if $\rho_1, \dots, \rho_n$ are the inequivalent irreducible representations of $G$, then 
\begin{equation} \label{eq:squares}
|G| = ({\rm{dim}}(\rho_1))^2 + \cdots + ({\rm{dim}}(\rho_n))^2.  
\end{equation}
In particular, if $G$ is abelian, then every irreducible representation of $G$ has dimension 1.

For a finite set $S$ and a function $f : S \rightarrow \mathbb{C}$, we set 
$$f(S) = \sum_{s \in S} f(s).$$ 
A subset $S$ of $G$ is {\it{integral}} if $\chi_\rho(S) \in \mathbb{Z}$ for every irreducible representation $\rho$ of $G$. For $G$ abelian, we shall also need the characterization of integral subsets of $G$ in terms of the Boolean algebra of $G$. To that end, we let $\mathcal{F}_G$ be the set of all subgroups of $G$. The {\it{Boolean algebra}} $\mathbb{B}(G)$ is the set whose elements are obtained by arbitrary finite intersections, unions, and complements of elements of $\mathcal{F}_G$. The next lemma has been established by Alperin--Peterson in \cite{AP12}:

\begin{lemma} \label{lemma:ap}
A subset $S$ of a finite abelian group $G$ is integral if and only if $S \in \mathbb{B}(G)$.
\end{lemma}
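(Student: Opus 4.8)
The plan is to prove the two implications by inserting between them a third, purely group-theoretic, condition: that $S$ be stable under all power maps $g \mapsto g^t$ with $t$ coprime to the exponent of $G$. Write $e$ for the exponent of $G$ and fix a primitive $e$-th root of unity $\zeta_e$. Since $G$ is abelian, its irreducible representations are exactly the linear characters $\chi : G \to \mathbb{C}^{\times}$, and each $\chi(g)$ is an $e$-th root of unity, so all the sums $\chi(S)$ lie in $\mathbb{Q}(\zeta_e)$. For $t$ coprime to $e$, the map $g \mapsto g^t$ is an automorphism of $G$ (so that $S^t$ has the same cardinality as $S$), and the Galois automorphism $\sigma_t$ of $\mathbb{Q}(\zeta_e)/\mathbb{Q}$ determined by $\sigma_t(\zeta_e) = \zeta_e^t$ satisfies $\sigma_t(\chi(g)) = \chi(g)^t = \chi(g^t)$ for every character $\chi$ and every $g \in G$; recall that $t \mapsto \sigma_t$ identifies $\mathrm{Gal}(\mathbb{Q}(\zeta_e)/\mathbb{Q})$ with $(\mathbb{Z}/e\mathbb{Z})^{\times}$.

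First I would establish that $S$ is integral if and only if $S^t = S$ for every $t$ coprime to $e$. For the forward direction, if $S$ is integral then $\chi(S) \in \mathbb{Z}$ is fixed by $\sigma_t$; applying $\sigma_t$ term by term and using the compatibility above gives $\chi(S) = \sigma_t(\chi(S)) = \chi(S^t)$ for every character $\chi$. As the characters of $G$ form an orthonormal basis of $L^2(G)$ for the inner product \eqref{eq:scalar}, the map $T \mapsto (\chi(T))_{\chi}$ is injective on subsets, whence $S = S^t$. Conversely, if $S$ is stable under all these power maps, then it is a disjoint union of orbits $O$ of the action $g \mapsto g^t$; the same computation shows that each $\chi(O)$ is fixed by every $\sigma_t$, hence lies in $\mathbb{Q}$, and being a sum of roots of unity it is a rational algebraic integer, so $\chi(O) \in \mathbb{Z}$. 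Summing over the orbits contained in $S$ yields $\chi(S) \in \mathbb{Z}$, i.e.\ $S$ is integral.

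It then remains to identify the subsets stable under all power maps with the elements of $\mathbb{B}(G)$. Since the generating family $\mathcal{F}_G$ consists of subgroups and $g \in H \iff \langle g \rangle \subseteq H$, two elements of $G$ belong to exactly the same subgroups precisely when they generate the same cyclic subgroup; hence the atoms of $\mathbb{B}(G)$ are the ``generator classes'' $\{g' \in G : \langle g' \rangle = \langle g \rangle\}$, and $\mathbb{B}(G)$ consists exactly of the unions of such classes. Finally I would check that the orbit of $g$ under the power maps, namely $\{g^t : \gcd(t,e) = 1\}$, coincides with the set of generators of $\langle g \rangle$: writing $m = |\langle g \rangle|$, every generator is $g^s$ with $\gcd(s,m) = 1$, and by the surjectivity of the reduction $(\mathbb{Z}/e\mathbb{Z})^{\times} \to (\mathbb{Z}/m\mathbb{Z})^{\times}$ such an $s$ can be lifted to some $t$ coprime to $e$. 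Thus the power-map orbits are exactly the atoms of $\mathbb{B}(G)$, so stability under all power maps is equivalent to membership in $\mathbb{B}(G)$, which together with the previous step proves the lemma. The main obstacle is precisely this last reconciliation of the number-theoretic description of the orbits (via the Galois action) with the lattice-theoretic description of the atoms of $\mathbb{B}(G)$ (via the subgroup lattice); the surjectivity of the reduction map on unit groups is the key input making the two coincide.
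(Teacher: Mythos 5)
Your proposal is correct and complete. Note, however, that the paper does not prove this lemma at all: it is quoted from Alperin--Peterson \cite{AP12}, so there is no internal proof to compare against. What you have written is essentially a reconstruction of the Alperin--Peterson argument itself: the identification of ${\rm{Gal}}(\mathbb{Q}(\zeta_e)/\mathbb{Q})$ with $(\mathbb{Z}/e\mathbb{Z})^{\times}$ acting compatibly on character values, the equivalence ``$S$ integral $\Leftrightarrow$ $S^t=S$ for all $t$ coprime to $e$'' (using that characters form an orthonormal basis for \eqref{eq:scalar}, hence separate subsets, and that orbit sums are rational algebraic integers), and the matching of power-map orbits with the atoms $\{x \in G : \langle x \rangle = \langle g \rangle\}$ via the surjectivity of $(\mathbb{Z}/e\mathbb{Z})^{\times} \to (\mathbb{Z}/m\mathbb{Z})^{\times}$ for $m \mid e$. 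All the steps check out: the power maps are automorphisms of $G$ (kernel trivial since $\gcd(t,e)=1$ kills no nontrivial element), the non-empty cells of the Boolean algebra generated by $\mathcal{F}_G$ are exactly the equal-membership classes, and $g \in H \Leftrightarrow \langle g \rangle \subseteq H$ identifies these with the generator classes. A pleasant bonus of your route is that your intermediate characterization (stability under all power maps coprime to the exponent) immediately yields the paper's Lemma \ref{lemma:ap_0} as well, which the paper likewise only quotes from \cite{AP12}; so your single argument furnishes proofs of both of the paper's imported statements.
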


\noindent
Let us also recall that, for $G$ abelian, the minimal elements of $\mathbb{B}(G)$ are called {\it{atoms}}. As proved by Alperin--Peterson \cite{AP12}, every element of $\mathbb{B}(G)$ is the union of some atoms of $G$, and every atom is of the form $\{ x \in G \, : \, \langle x \rangle = \langle g \rangle \}$ ($g \in G$). In particular, we have:

\begin{lemma} \label{lemma:ap_0}
Let $G$ be a finite abelian group and $S \in \mathbb{B}(G)$. Then $S = S^j$ for every non-zero integer $j$ which is coprime to every element order in $G$.
\end{lemma}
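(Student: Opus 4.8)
The plan is to reduce the statement to the case of atoms and then exploit the fact that raising to the $j$-th power is an automorphism of each cyclic subgroup. First I would recall from the discussion preceding the lemma that every element of $\mathbb{B}(G)$ is a union of atoms, and that each atom has the form $A_g = \{x \in G : \langle x \rangle = \langle g \rangle\}$ for some $g \in G$; that is, $A_g$ is precisely the set of generators of the cyclic subgroup $\langle g \rangle$. Writing $S = \bigcup_i A_{g_i}$ as a union of atoms, I would observe that the power map $x \mapsto x^j$ distributes over unions, so that $S^j = \bigcup_i (A_{g_i})^j$. Hence it suffices to prove $A_g^{\,j} = A_g$ for a single atom $A_g$.

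Next I would fix an atom $A_g$ and set $n = |\langle g \rangle|$, the order of $g$. Since $j$ is coprime to every element order in $G$, in particular $\gcd(j,n) = 1$, so the map $\phi : \langle g \rangle \to \langle g \rangle$, $x \mapsto x^j$, is a well-defined automorphism of the cyclic group $\langle g \rangle$ (for negative $j$ it is the composite of inversion with $x \mapsto x^{|j|}$, again an automorphism). An automorphism of $\langle g \rangle$ permutes its set of generators, and $A_g$ is exactly that set, so $\phi(A_g) = A_g$, i.e., $A_g^{\,j} = A_g$. Combining the two steps then gives $S^j = S$.

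I do not anticipate a genuine obstacle here; the only points requiring a moment's care are the reduction to atoms (using the distributivity of the power map over unions together with the structural description of $\mathbb{B}(G)$) and the verification that coprimality of $j$ to every element order is exactly what makes $x \mapsto x^j$ an automorphism of each cyclic subgroup. If one prefers to avoid the language of automorphisms, the same conclusion follows by a direct two-sided inclusion: for $x \in A_g$ one has $\langle x^j \rangle = \langle x \rangle = \langle g \rangle$ because $\gcd(j,n)=1$, giving $A_g^{\,j} \subseteq A_g$; and choosing $k$ with $jk \equiv 1 \pmod{n}$ shows that each $y \in A_g$ equals $(y^k)^j$ with $y^k \in A_g$, yielding the reverse inclusion.
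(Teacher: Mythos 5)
Your proof is correct and follows exactly the route the paper intends: the lemma is stated there as an immediate consequence ("In particular\dots") of the Alperin--Peterson facts that every element of $\mathbb{B}(G)$ is a union of atoms and that each atom is the generator set $\{x \in G : \langle x \rangle = \langle g \rangle\}$, which is precisely your reduction. Your verification that $x \mapsto x^j$ is an automorphism of each $\langle g \rangle$ (equivalently, your two-sided inclusion via $\langle x^j \rangle = \langle x \rangle$ when $\gcd(j,\mathrm{ord}(x))=1$) correctly fills in the one step the paper leaves implicit, including the case of negative $j$.
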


\subsection{Graphs} \label{ssec:prelim_2}

A {\it{mixed graph}} (with no loop and no multiple edge) is a couple $\Gamma = (V,E)$, where $V$ and $E$ are a non-empty finite set and a subset of $(V \times V) \setminus \{(v,v) \, : \, v \in V\}$, respectively. If $\Gamma$ fulfills: $(v_1,v_2) \in E$ if and only if $(v_2,v_1) \in E$ for $v_1 \not= v_2 $ (resp., if $(v_1,v_2) \in E$, then $(v_2,v_1) \not \in E$ for $v_1 \not= v_2 $), then $\Gamma$ is an {\it{undirected graph}} (resp., a {\it{directed graph}}). Writing $V=\{v_1, \dots, v_{|V|}\}$, we consider the {\it{Hermitian adjacency matrix}} $A(\Gamma)$ of $\Gamma$, which is the $|V| \times |V|$-matrix whose $(k,j)$-entry $a_{kj}$ is defined by
$$a_{kj} = \left \{
\begin{array}{ll}
1 & \mbox{if} \, \, \, (v_k, v_j) \in E \, \, \, {\rm{and}} \, \, \, (v_j, v_k) \in E, \\
{\bf i} & \mbox{if} \, \, \, (v_k, v_j) \in E \, \, \, {\rm{and}} \, \, \, (v_j, v_k) \not \in E, \\
-{\bf i} & \mbox{if} \, \, \, (v_k, v_j) \not \in E \, \, \, {\rm{and}} \, \, \, (v_j, v_k) \in E, \\
0 & \mbox{otherwise}
\end{array}
\right.
$$
for $1 \leq k,j \leq |V|$, where ${\bf i} = \sqrt{-1}$. If $\Gamma$ is undirected, then $A(\Gamma)$ is the ``usual" adjacency matrix of $\Gamma$. The mixed graph $\Gamma$ is {\it{integral}} if the eigenvalues of $A(\Gamma)$, which are always real numbers, are in $\mathbb{Z}$.

Given a finite group $G$ and a subset $S$ of $G \setminus \{1\}$, the {\it{mixed Cayley graph}} ${\rm{Cay}}(G,S)$ over $G$ with respect to $S$ is the mixed graph $(G, \{(g,h) \in G^2 : g^{-1} h \in S\})$. Setting $T = \{s \in S \, : \, s^{-1} \not \in S\},$ the mixed Cayley graph ${\rm{Cay}}(G,S)$ is undirected (resp., directed) if and only if $T = \emptyset$ (resp., $T = S$), in which case we say that ${\rm{Cay}}(G,S)$ is an {\it{undirected Cayley graph}} (resp., a {\it{directed Cayley graph}}). 

In the next section, we shall be interested in the integrality of mixed Cayley graphs, and hence, shall make use of the following result, which follows from Babai's work \cite{Bab79}:

\begin{lemma} \label{lemma:babai}
Let $G$ be a finite group, let $S \subseteq G \setminus \{1\}$, and let $T = \{s \in S \, : \, s^{-1} \not \in S\}$. Let $\rho_1, \dots, \rho_n$ be the inequivalent irreducible representations of $G$. Then the eigenvalues of the Hermitian adjacency matrix of the mixed Cayley graph ${\rm{Cay}}(G,S)$ are those of
$$\sum_{s \in S \setminus T} \rho_j(s) + {\bf i} \sum_{s \in T} \rho_j(s) - {\bf i} \sum_{s \in T} \rho_j(s^{-1}) \, \, \, {\rm{for}} \, \, \, j \in \{1, \dots, n\}.$$
\end{lemma}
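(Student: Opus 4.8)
The plan is to recognize the Hermitian adjacency matrix $A = A({\rm{Cay}}(G,S))$ as the image of a single element of the group algebra $\mathbb{C}[G]$ under the regular representation, and then to diagonalize it by blocks using the decomposition of the regular representation into irreducibles. First I would index the rows and columns of $A$ by the elements of $G$ and observe that the $(g,h)$-entry $a_{g,h}$ depends only on $x = g^{-1}h$: indeed $(g,h)$ is an edge exactly when $x \in S$, and $(h,g)$ is an edge exactly when $x^{-1} \in S$, so the definition of the Hermitian adjacency matrix yields $a_{g,h} = w(g^{-1}h)$, where $w : G \to \mathbb{C}$ is the weight function equal to $1$ on $S \setminus T$, to ${\bf i}$ on $T$, to $-{\bf i}$ on $T^{-1} = \{s^{-1} : s \in T\}$, and to $0$ elsewhere. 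Here the three sets $S \setminus T$, $T$, $T^{-1}$ are pairwise disjoint, since $s \in T$ forces $s^{-1} \notin S$, and none of them contains $1$.

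Next I would encode this as $A = \sum_{x \in G} w(x)\, P_x$, where $P_x$ is the permutation matrix describing multiplication by $x$ in the regular representation of $G$, so that $A$ is the image of $\alpha = \sum_{x \in G} w(x)\, x \in \mathbb{C}[G]$. The regular representation decomposes as $\bigoplus_{j=1}^{n} \rho_j^{\oplus {\rm{dim}}(\rho_j)}$, so in a suitable basis $A$ becomes block-diagonal, the block attached to $\rho_j$ being $\rho_j(\alpha) = \sum_{x \in G} w(x)\, \rho_j(x)$ repeated ${\rm{dim}}(\rho_j)$ times. Consequently the eigenvalues of $A$ are exactly those of the matrices $\rho_j(\alpha)$ for $j \in \{1, \dots, n\}$. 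Splitting the sum $\rho_j(\alpha)$ according to the three pieces of the support of $w$, and reindexing the contribution of $T^{-1}$ via $x = s^{-1}$ with $s \in T$, turns $\rho_j(\alpha)$ into precisely $\sum_{s \in S \setminus T} \rho_j(s) + {\bf i} \sum_{s \in T} \rho_j(s) - {\bf i} \sum_{s \in T} \rho_j(s^{-1})$, which is the asserted matrix.

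The substantive input is the block-diagonalization via the regular representation, which is Babai's result \cite{Bab79}; although it is usually stated for the ordinary adjacency matrix (real $0/1$ coefficients), its argument uses only that $A$ is a $\mathbb{C}$-linear combination of the permutation matrices $P_x$, so it applies verbatim to the complex weights $1, \pm {\bf i}$ appearing here. The one point requiring care is fixing a consistent convention for the regular representation, so that the entry $a_{g,h} = w(g^{-1}h)$ indeed matches the $(g,h)$-entry of $\sum_{x \in G} w(x)\, P_x$; I would settle this at the outset (using right multiplication) and keep track of it through the reindexing. As a consistency check, choosing each $\rho_j$ unitary makes each $\rho_j(\alpha)$ Hermitian, using $\rho_j(s^{-1}) = \rho_j(s)^{*}$ together with the symmetry of $S \setminus T$, in agreement with the fact that the eigenvalues of $A$ are real.
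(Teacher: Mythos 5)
Your proposal is correct and follows essentially the same route as the paper: both identify the Hermitian adjacency matrix with the image of the group-algebra element $\sum_{g \in G} \alpha(g)\, g$ (your weight function $w$ is exactly the paper's $\alpha$) under the regular representation, then invoke the decomposition $\rho^{\mathrm{reg}} \simeq \bigoplus_j \dim(\rho_j)\,\rho_j$ to reduce to the blocks $\rho_j(\alpha)$. The only difference is one of detail: where the paper cites the proof of Babai's Theorem 3.1 for the identity $A = \sum_{g}\alpha(g)\rho^{\mathrm{reg}}(g)$, you verify it directly (entry computation, disjointness of $S \setminus T$, $T$, $T^{-1}$, and the extension of Babai's argument to complex weights), which is a sound and slightly more self-contained presentation of the same argument.
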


\begin{proof}
Denote the regular representation of $G$ by $\rho^{{\rm reg}}$. By the proof of \cite[Theorem 3.1]{Bab79}, letting $\alpha : G \rightarrow \mathbb{C}$ denote the map defined by
$$\alpha(g) = \left \{
\begin{array}{ll}
1 & \mbox{if} \, \, \, g \in S \setminus T, \\
{\bf i} & \mbox{if} \, \, \, g \in T, \\
-{\bf i} & \mbox{if} \, \, \, g^{-1} \in T, \\
0 & \mbox{otherwise},
\end{array}
\right.$$
the Hermitian adjacency matrix of ${\rm Cay}(G,S)$ is
$$\sum_{g \in G}\alpha(g)\rho^{{\rm reg}}(g)=\sum_{s \in S \setminus T}\rho^{{\rm reg}}(s)+{\bf i} \sum_{s \in T}\rho^{{\rm reg}}(s)-{\bf i}\sum_{s \in T}\rho^{{\rm reg}}(s^{-1}).$$ 
Recall that $\rho^{{\rm reg}}$ is equivalent to $m_1 \rho_1 \oplus  \dots \oplus m_n \rho_n$, where $m_j$ is the dimension of $\rho_j$ for every $j \in \{1, \dots, n\}$. Therefore, we deduce that the eigenvalues of the Hermitian adjacency matrix of ${\rm Cay}(G,S)$ are those of
$$\sum_{s \in S \setminus T} \rho_j(s) + {\bf i} \sum_{s \in T} \rho_j(s) - {\bf i} \sum_{s \in T} \rho_j(s^{-1}) \, \, \, {\rm{for}} \, \, \, j \in \{1, \dots, n\},$$
as needed for the lemma.
\end{proof}

\section{Characterization of integrality} \label{sec:main}

\subsection{Main result} \label{ssec:main_1}

The main aim of this section is Theorem \ref{thm:main}, which characterizes integral mixed Cayley graphs over any given non-abelian finite group admitting an abelian subgroup of index 2. First, we observe that non-abelian finite groups $G$ admitting an abelian subgroup $A$ of index 2 (and order at least 3 without loss of generality) are exactly those admitting a presentation
\begin{equation} \label{pres}
G = \langle A, x \, | \, x^2 = y, xax^{-1} = f(a) \, \, (a \in A) \rangle,
\end{equation}
where $y$ is any given element of $A$ and where $f$ is any given automorphism of $A$ of order 2 and fixing $y$. Therefore, for this section, let $A$ be a finite abelian group of order at least $3$, let $f \in {\rm{Aut}}(A)$ be of order 2, and let $y \in A$ be such that $f(y)=y$. We define the finite group $G$ by the presentation given in \eqref{pres}. Note that $G$ is the disjoint union of $A$ and $xA$. From now on, we consider the subgroup $B =\{f(a)a^{-1} \, : \, a \in A\}$ of $A$.

\begin{theorem} \label{thm:main}
Let $S \subseteq G \setminus \{1\}$, and let $T = \{s \in S \, : s^{-1} \not \in S\}.$ Set $S \setminus T = S_1 \cup x S_2$ and $T = T_1 \cup x T_2$ with $S_1, S_2, T_1, T_2 \subseteq A$. Then ${\rm{Cay}}(G,S)$ is integral if and only if the next two conditions hold:

\vspace{0.5mm}

\noindent
{\rm{(1)}} $\rho(S_1) + \rho(x) \rho(S_2) - 2 {\rm{Im}}(\rho(T_1)) - 2 {\rm{Im}}(\rho(x) \rho(T_2)) \in \mathbb{Z}$ for every one-dimensional representation $\rho : G \rightarrow \mathbb{C}^*$ of $G$,

\vspace{0.5mm}

\noindent
{\rm{(2)}} for every one-dimensional representation $\pi : A \rightarrow \mathbb{C}^*$ of $A$ with $B \not \subseteq {\rm{ker}}(\pi)$, we have $\delta(\pi) \in \mathbb{Z}$ and there exists $\zeta \in \mathbb{Z}$ such that $\delta(\pi)^2 - 4 \epsilon(\pi) = \zeta^2$,
where 
\begin{equation} \label{eq:cinq1}
\left\{
\begin{array}{lll}
\alpha(\pi) =\pi(T_1) - \pi(T_1^{-1}), \\
\beta(\pi) = \pi(yf(T_2)) - \pi(T_2^{-1}), \\
\gamma(\pi) = \pi(f(T_1)) - \pi(f(T_1^{-1})), \\
\delta(\pi) = \pi(f(S_1)) + \pi(S_1) + {\bf i} (\alpha(\pi) + \gamma(\pi)),\end{array}
\right.
\end{equation}
 and $\epsilon(\pi)$ equals
\begin{equation} \label{eq:epsilon}
\pi(S_1) \pi(f(S_1)) - \pi(S_2) \pi(S_2^{-1}) + {\bf i}(\pi(S_1) \gamma(\pi) + \pi(f(S_1)) \alpha(\pi) - \beta(\pi)\pi(S_2) + \overline{\beta(\pi)}\pi(S_2^{-1})) \\
- \alpha(\pi) \gamma(\pi) - \beta(\pi) \overline{\beta(\pi)}.
\end{equation}
\end{theorem}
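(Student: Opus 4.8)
The plan is to invoke Lemma~\ref{lemma:babai}, which reduces the integrality of ${\rm Cay}(G,S)$ to the requirement that, for every irreducible representation $\rho_j$ of $G$, the matrix
$$M_j = \sum_{s\in S\setminus T}\rho_j(s) + {\bf i}\sum_{s\in T}\rho_j(s) - {\bf i}\sum_{s\in T}\rho_j(s^{-1})$$
has only integer eigenvalues. Since $A$ is abelian of index $2$ in $G$, every irreducible representation of $G$ has dimension $1$ or $2$; I would use (as recorded in \S\ref{sssec:main_1}) that the one-dimensional ones are the characters $\rho:G\to\mathbb{C}^*$, while the two-dimensional ones are exactly those induced from characters $\pi:A\to\mathbb{C}^*$ with $\pi\circ f\neq\pi$, i.e.\ with $B\not\subseteq{\rm ker}(\pi)$. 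Two elementary facts underpin everything. First, the eigenvalues of $A({\rm Cay}(G,S))$ are real algebraic integers, because the characteristic polynomial of a Hermitian matrix with entries in $\mathbb{Z}[{\bf i}]$ lies in $\mathbb{Z}[X]$; hence each eigenvalue is an integer if and only if it is rational. Second, $s\in S\setminus T$ forces $s^{-1}\in S\setminus T$, so $S\setminus T$ is closed under inversion; writing $S\setminus T=S_1\cup xS_2$ and computing $(xa)^{-1}=xf(a)^{-1}y^{-1}$, this gives the set identities $S_1=S_1^{-1}$ and $yf(S_2)=S_2^{-1}$ inside $A$, which will be decisive.

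For a one-dimensional $\rho$, the scalar $M_j$ equals $\rho(S_1)+\rho(x)\rho(S_2)+{\bf i}(\rho(T_1)+\rho(x)\rho(T_2))-{\bf i}(\overline{\rho(T_1)}+\overline{\rho(x)\rho(T_2)})$ upon using $\rho(s^{-1})=\overline{\rho(s)}$; rewriting ${\bf i}(z-\bar z)=-2\,{\rm Im}(z)$ turns this into $\rho(S_1)+\rho(x)\rho(S_2)-2\,{\rm Im}(\rho(T_1))-2\,{\rm Im}(\rho(x)\rho(T_2))$, whose membership in $\mathbb{Z}$ is precisely condition~(1). Thus the one-dimensional representations contribute exactly condition~(1).

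For the two-dimensional case I would fix $\pi$ with $B\not\subseteq{\rm ker}(\pi)$ and use the explicit model
$$\rho(a)=\begin{pmatrix}\pi(a) & 0\\ 0 & \pi(f(a))\end{pmatrix}\ (a\in A),\qquad \rho(x)=\begin{pmatrix}0 & \pi(y)\\ 1 & 0\end{pmatrix},$$
which one checks satisfies the relations of \eqref{pres}. Computing $\rho(xa)=\rho(x)\rho(a)$ and $\rho((xa)^{-1})$ and assembling $M_j$ entrywise, one finds
$$M_j=\begin{pmatrix}\pi(S_1)+{\bf i}\,\alpha(\pi) & \pi(yf(S_2))+{\bf i}\,\beta(\pi)\\[2pt] \pi(S_2)-{\bf i}\,\overline{\beta(\pi)} & \pi(f(S_1))+{\bf i}\,\gamma(\pi)\end{pmatrix},$$
with $\alpha,\beta,\gamma$ as in \eqref{eq:cinq1}, where the $(2,1)$-entry is put in this form via $\overline{\pi(T_2^{-1})}=\pi(T_2)$ and $\pi(y)^{-1}\pi(f(T_2^{-1}))=\overline{\pi(yf(T_2))}$. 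Then ${\rm tr}(M_j)=\delta(\pi)$ is immediate, and expanding $\det(M_j)=M_{11}M_{22}-M_{12}M_{21}$ and substituting the inversion identity $\pi(yf(S_2))=\pi(S_2^{-1})$ collapses the off-diagonal product to $\pi(S_2)\pi(S_2^{-1})$ and reproduces exactly $\epsilon(\pi)$ of \eqref{eq:epsilon}.

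Finally I would translate ``both eigenvalues of $M_j$ are integers'' into condition~(2). These eigenvalues are real and are the roots of $X^2-\delta(\pi)X+\epsilon(\pi)$, hence equal $(\delta(\pi)\pm\sqrt{\delta(\pi)^2-4\epsilon(\pi)})/2$; if $\delta(\pi)\in\mathbb{Z}$ and $\delta(\pi)^2-4\epsilon(\pi)=\zeta^2$ with $\zeta\in\mathbb{Z}$, then both roots are rational, hence (being algebraic integers) integers, while conversely integer eigenvalues $\lambda_\pm$ give $\delta(\pi)=\lambda_++\lambda_-\in\mathbb{Z}$ and $\delta(\pi)^2-4\epsilon(\pi)=(\lambda_+-\lambda_-)^2$. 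Here both $\epsilon(\pi)\in\mathbb{Z}$ and the correct parity of $\zeta$ are automatic from the algebraic-integrality remark, which is why condition~(2) only needs $\delta(\pi)\in\mathbb{Z}$ together with the perfect-square requirement. Combining the one- and two-dimensional contributions yields the stated equivalence. I expect the main obstacle to be the determinant bookkeeping: placing every $\pi$-sum (especially the $f$-twisted and inverse sums) correctly and then recognizing that the inversion identity $yf(S_2)=S_2^{-1}$ is exactly what matches the stated $\epsilon(\pi)$; the reality of $\delta(\pi)$ (via $S_1=S_1^{-1}$) and the algebraic-integer argument are the remaining delicate points.
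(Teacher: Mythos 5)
Your proposal is correct and takes essentially the same route as the paper's proof: Lemma \ref{lemma:babai}, the split of irreducible representations into one-dimensional characters and the two-dimensional induced representations $R_\pi$ with $B \not\subseteq {\rm{ker}}(\pi)$, the inversion identity $S_2^{-1} = yf(S_2)$, and the very same $2\times 2$ matrix whose trace and determinant are $\delta(\pi)$ and $\epsilon(\pi)$. The only (equivalent) cosmetic variation is in the last step, where you conclude that the roots $(\delta(\pi)\pm\zeta)/2$ are integers because they are rational algebraic integers, while the paper first deduces $\epsilon(\pi)\in\mathbb{Z}$ and then argues that $\delta(\pi)$ and $\zeta$ have the same parity.
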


\subsection{Proof of Theorem \ref{thm:main}} \label{ssec:main_2}

To prove our main result, we shall make use of Lemma \ref{lemma:babai}, and so prove that the eigenvalues of 
$$\sum_{s \in S \setminus T} \rho(s) + {\bf i} \sum_{s \in T} \rho(s) - {\bf i} \sum_{s \in T} \rho(s^{-1}),$$
for $\rho$ ranging over the irreducible representations of $G$, are in $\mathbb{Z}$ if and only if (1) and (2) in the statement of the theorem hold. To that end, we first determine all irreducible representations of $G$, up to equivalence.

\subsubsection{Irreducible representations of $G$} \label{sssec:main_1}

The next classification, for which we could not find an explicit reference in the literature, generalizes and unifies those for genera\-lized dihedral groups and generalized dicyclic groups from \cite{HL21, BL22}.

\vspace{2mm}

\noindent
(1) {\it{One-dimensional representations.}} Let $\rho : G \rightarrow \mathbb{C}^*$ be a one-dimensional representation of $G$. Then $\rho(a)=\rho(x)\rho(a)\rho(x)^{-1}=\rho(xax^{-1})=\rho(f(a))$ and so $\rho(f(a) a^{-1})=1$ for $a \in A$, i.e., $\rho$ is trivial on $B$. Hence, every one-dimensional representation of $G$ gives rise to a one-dimensional representation of $A/B$. 

Conversely, let $\pi:A/B \rightarrow \mathbb{C}^*$ be a one-dimensional representation of $A/B$. Then there are exactly two inequivalent one-dimensional representations $\rho_{1}$ and $\rho_2$ of $G$ giving rise to $\pi$ and, denoting the reduction modulo $B$ of $a \in A$ by $\overline{a}$ and fixing a squareroot $\sqrt{\pi(\overline{y})}$ of $\pi(\overline{y})$ in $\mathbb{C}$, they are actually given by 

\noindent
$\bullet$ $\rho_1(x) = \sqrt{\pi(\overline{y})}$ and $\rho_1(a)=\pi(\overline{a})$ for $a \in A$, 

\noindent
$\bullet$ $\rho_2(x) = - \sqrt{\pi(\overline{y})}$ and $\rho_2(a)=\pi(\overline{a})$ for $a \in A$.

In particular, $G$ has exactly $2(A:B)$ inequivalent one-dimensional representations. Note that $(A:B)$ is the index of $B$ in $A$.

\vspace{2mm}

\noindent
(2) {\it{Two-dimensional representations.}} Let $\pi : A \rightarrow \mathbb{C}^*$ be a one-dimensional re\-pre\-sen\-tation of $A$ with $B \not \subseteq {\rm{ker}}(\pi)$. Let $R_\pi : G \rightarrow {\rm{GL}}_2(\mathbb{C})$ be the map defined by
\begin{equation}\label{equmatrepinds}
R_{\pi}(a)= \begin{pmatrix} \pi(a) & 0 \\ 0 & \pi(f(a))  \end{pmatrix} \quad {\rm{and}} \, \, \, R_\pi(xa) = \begin{pmatrix} 0 & \pi(yf(a)) \\ \pi(a) & 0 \end{pmatrix} \, \, \, (a \in A).
\end{equation}
As $f$ has order 2 and fixes $y$, the map $R_\pi$ is a representation of $G$. In fact, $R_\pi$ is the re\-pre\-sen\-ta\-tion $\mathrm{Ind}_{A}^{G}(\pi)$ of $G$ induced by $\pi$. 

\begin{lemma} \label{lem:irreducible}
The representation $R_\pi$ is irreducible.
\end{lemma}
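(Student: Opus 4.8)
The plan is to show directly that $\mathbb{C}^2$ admits no proper nonzero subspace invariant under $R_\pi$. The whole point is to exploit the hypothesis $B \not\subseteq {\rm{ker}}(\pi)$ to pin down the only possible invariant subspaces using just the diagonal part $R_\pi(a)$ ($a \in A$), and then to eliminate these using the anti-diagonal action of $x$.

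First I would use $B \not\subseteq {\rm{ker}}(\pi)$ to produce an element $a_0 \in A$ with $\pi(f(a_0)a_0^{-1}) \not= 1$, i.e., $\pi(a_0) \not= \pi(f(a_0))$; this is immediate from the definition $B = \{f(a)a^{-1} \, : \, a \in A\}$. By \eqref{equmatrepinds}, $R_\pi(a_0)$ is then a diagonal matrix with the two distinct eigenvalues $\pi(a_0)$ and $\pi(f(a_0))$, whose eigenspaces are exactly the coordinate lines $\mathbb{C}e_1$ and $\mathbb{C}e_2$ of $\mathbb{C}^2$. Since any subspace $W$ invariant under $R_\pi$ is in particular invariant under $R_\pi(a_0)$, and an invariant subspace of a diagonalizable endomorphism of $\mathbb{C}^2$ with distinct eigenvalues is a sum of eigenspaces, the only candidates for a proper nonzero invariant $W$ are $\mathbb{C}e_1$ and $\mathbb{C}e_2$.

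Second I would rule these out using $R_\pi(x)$. Taking $a = 1$ in \eqref{equmatrepinds} (and using $f(1) = 1$) gives $R_\pi(x) = \left(\begin{smallmatrix} 0 & \pi(y) \\ 1 & 0 \end{smallmatrix}\right)$, which sends $e_1$ to $e_2$ and $e_2$ to $\pi(y)e_1$; as $\pi(y) \not= 0$, it interchanges the two coordinate lines. Hence neither $\mathbb{C}e_1$ nor $\mathbb{C}e_2$ is stable under $R_\pi(x)$, so neither is invariant under $R_\pi$. Combined with the previous paragraph, the only invariant subspaces of $\mathbb{C}^2$ are $\{0\}$ and $\mathbb{C}^2$, which is exactly the irreducibility of $R_\pi$.

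There is no serious obstacle here: the argument is elementary linear algebra once one observes that $B \not\subseteq {\rm{ker}}(\pi)$ is precisely the condition making the two diagonal characters $a \mapsto \pi(a)$ and $a \mapsto \pi(f(a))$ distinct, the only mild care being the verification of that first step. Alternatively, one could bypass invariant subspaces entirely and invoke the character criterion recalled in \S\ref{ssec:prelim_1}: since ${\rm{Tr}}(R_\pi(xa)) = 0$ for all $a \in A$ while ${\rm{Tr}}(R_\pi(a)) = \pi(a) + \pi(f(a))$, a short computation gives $(\chi_{R_\pi} \, | \, \chi_{R_\pi}) = \frac{1}{2|A|}\sum_{a \in A} |\pi(a) + \pi(f(a))|^2 = 1$, the cross terms vanishing exactly because the character $a \mapsto \pi(f(a)a^{-1})$ of $A$ is nontrivial, i.e., because $B \not\subseteq {\rm{ker}}(\pi)$.
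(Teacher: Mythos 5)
Your main argument is correct, and it takes a genuinely different route from the paper. The paper proves irreducibility via the character criterion recalled in \S\ref{ssec:prelim_1}: using \eqref{equmatrepinds} it computes $(\chi_{R_\pi}\,|\,\chi_{R_\pi}) = 1 + \frac{1}{|A|}\sum_{a\in A}\pi(f(a)a^{-1})$ and then kills the sum by the translation trick $\sum_{a\in A}\pi(f(a)a^{-1}) = \pi(f(b)b^{-1})\sum_{a\in A}\pi(f(a)a^{-1})$ for some $b$ with $\pi(f(b)b^{-1})\neq 1$ --- this is exactly the computation you sketch in your closing ``alternatively'' remark, so your fallback \emph{is} the paper's proof. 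Both arguments deploy the hypothesis $B \not\subseteq {\rm{ker}}(\pi)$ at the same juncture but in different guises: for you it produces $a_0 \in A$ with $\pi(a_0) \neq \pi(f(a_0))$, so that $R_\pi(a_0)$ is diagonal with distinct eigenvalues, forcing any proper nonzero invariant subspace to be one of the coordinate lines, which $R_\pi(x) = \left(\begin{smallmatrix} 0 & \pi(y) \\ 1 & 0 \end{smallmatrix}\right)$ visibly interchanges (here $\pi(y)\neq 0$, as you note); for the paper it makes the map $a \mapsto \pi(f(a)a^{-1})$ a nontrivial character of $A$, whose sum over $A$ therefore vanishes. Your route buys a fully elementary, coordinate-level picture of why no line survives, and in effect verifies by hand Mackey's irreducibility criterion for the induced representation ${\rm{Ind}}_A^G(\pi)$: irreducibility amounts to $\pi \neq \pi\circ f$, which is precisely the condition $B \not\subseteq {\rm{ker}}(\pi)$. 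The paper's route is shorter to write, reuses the inner product \eqref{eq:scalar} already set up, and isolates the vanishing-sum identity \eqref{eq:2}, which is the character-theoretic shadow of your eigenvalue-separation step. Both proofs are complete; there is no gap in yours.
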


\begin{proof}
As recalled in \S\ref{ssec:prelim_1}, it suffices to show 
\begin{equation} \label{eq:0}
(\chi_{R_\pi} | \chi_{R_\pi})=1,
\end{equation}
where $(\cdot | \cdot)$ is defined in \eqref{eq:scalar}. By \eqref{equmatrepinds}, we have
\begin{equation} \label{eq:1}
(\chi_{R_\pi} | \chi_{R_\pi}) = \frac{1}{2|A|} \sum_{a \in A} (\pi(a) + \pi(f(a)))(\pi(a^{-1})+\pi(f(a)^{-1}))  = 1 + \frac{1}{|A|} \sum_{a\in A} \pi(f(a) a^{-1}).
\end{equation}
Now, since $B \not \subseteq {\rm{ker}}(\pi)$, there exists $b \in A$ with $\pi(f(b) b^{-1}) \not=1$. Since
$$\sum_{a\in A} \pi(f(a) a^{-1}) = \sum_{a\in A} \pi(f(ab) (ab)^{-1}) = \pi(f(b) b^{-1}) \sum_{a\in A} \pi(f(a) a^{-1}),$$
we get
\begin{equation} \label{eq:2}
\sum_{a\in A} \pi(f(a) a^{-1})=0.
\end{equation}
It then remains to combine \eqref{eq:1} and \eqref{eq:2} to get \eqref{eq:0}, as needed for the lemma.
\end{proof}

\begin{lemma} \label{lemma:conj}
For $i\in \{1,2\}$, let $\pi_i : A \rightarrow \mathbb{C}^*$ be a one-dimensional representation of $A$ with $B \not \subseteq {\rm{ker}}(\pi_i)$. Then $R_{\pi_1}$ and $R_{\pi_2}$ are equivalent if and only if $\pi_2= \pi_1$ or $\pi_2 = \pi_1 \circ f$.
\end{lemma}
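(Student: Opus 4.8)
The plan is to rely on the character criterion for equivalence recalled in \S\ref{ssec:prelim_1}: the representations $R_{\pi_1}$ and $R_{\pi_2}$ are equivalent if and only if $\chi_{R_{\pi_1}} = \chi_{R_{\pi_2}}$. So I would first compute the character of $R_\pi$ directly from \eqref{equmatrepinds}. Taking traces, the matrices $R_\pi(xa)$ have zero diagonal, so $\chi_{R_\pi}$ vanishes identically on the coset $xA$, while $\chi_{R_\pi}(a) = \pi(a) + \pi(f(a))$ for $a \in A$. Since the values on $xA$ vanish for every choice of $\pi_i$, the equivalence of $R_{\pi_1}$ and $R_{\pi_2}$ is equivalent to the single identity $\pi_1 + \pi_1 \circ f = \pi_2 + \pi_2 \circ f$ of functions on $A$, and the whole lemma reduces to analysing this identity.

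For the ``if'' direction I would simply note that it holds trivially when $\pi_2 = \pi_1$, and that when $\pi_2 = \pi_1 \circ f$ it follows from $f$ having order $2$, since then $\pi_2 + \pi_2 \circ f = \pi_1 \circ f + \pi_1$, which coincides with the left-hand side.

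The real content is the ``only if'' direction, and the key preliminary step is the observation that the standing hypothesis $B \not\subseteq {\rm{ker}}(\pi_i)$ is exactly what guarantees $\pi_i \neq \pi_i \circ f$ for each $i \in \{1,2\}$: indeed, $\pi_i = \pi_i \circ f$ would say $\pi_i(f(a) a^{-1}) = 1$ for all $a \in A$, i.e. $B \subseteq {\rm{ker}}(\pi_i)$. With this in hand, I would view $\pi_1, \pi_1 \circ f, \pi_2, \pi_2 \circ f$ as irreducible (one-dimensional) characters of the abelian group $A$ and invoke their linear independence, which follows from orthonormality with respect to the inner product \eqref{eq:scalar}. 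The identity above then equates a sum of two \emph{distinct} characters on each side, so linear independence forces $\{\pi_1, \pi_1 \circ f\} = \{\pi_2, \pi_2 \circ f\}$ as sets, whence $\pi_2 = \pi_1$ or $\pi_2 = \pi_1 \circ f$.

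I do not expect a genuine obstacle here; the only point demanding care is precisely the step $\pi_i \neq \pi_i \circ f$, which ensures that each side of the identity is an honest sum of two distinct irreducible characters, so that the comparison of the underlying sets is legitimate (without it, one could have coincidences collapsing a sum to a single character with a coefficient, and the multiset argument would break down). This is the sole place where the hypothesis $B \not\subseteq {\rm{ker}}(\pi_i)$ is used.
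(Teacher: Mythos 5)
Your proof is correct, but it takes a genuinely different route from the paper's. Both arguments begin the same way, reducing the equivalence of $R_{\pi_1}$ and $R_{\pi_2}$ to the character identity $\pi_1(a)+\pi_1(f(a))=\pi_2(a)+\pi_2(f(a))$ for all $a \in A$ (the paper's \eqref{eq:pi}; the vanishing of both characters on the coset $xA$, which you note explicitly, is left implicit there). From that point the paper proceeds entirely by hand, pointwise in $a$: using that the values are roots of unity, it compares $|1+\pi_1(f(a)a^{-1})|$ with $|1+\pi_2(f(a)a^{-1})|$ to get the dichotomy \eqref{eq:pi2}, splits into cases, treats the degenerate case $\pi_1(f(a)a^{-1})=-1$ separately by passing to $a^2$, and finally runs a multiplicativity (``patching'') argument to show that the pointwise conclusion $\pi_2(a)\in\{\pi_1(a),\pi_1(f(a))\}$ cannot mix between the two alternatives as $a$ varies. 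Your appeal to the linear independence (orthonormality) of the irreducible characters of the abelian group $A$ replaces all of this --- including the two most delicate steps, the $-1$ case and the final patching --- with a single standard fact, at the modest cost of invoking orthogonality relations that the paper's preliminaries record only in the special form $(\chi_\rho \,|\, \chi_\rho)=1$. Your identification of where the hypothesis $B \not\subseteq {\rm{ker}}(\pi_i)$ enters is exactly right: it is equivalent to $\pi_i \neq \pi_i \circ f$, so each side of the identity is a sum of two \emph{distinct} irreducible characters and the comparison of sets $\{\pi_1,\pi_1\circ f\}=\{\pi_2,\pi_2\circ f\}$ is legitimate. The only detail worth making explicit is that if $\pi_2 \notin \{\pi_1, \pi_1\circ f\}$ then also $\pi_2\circ f \notin \{\pi_1, \pi_1\circ f\}$ (apply $f$ and use $f^2={\rm{id}}$), so the relation would involve four pairwise distinct characters, which linear independence forbids; this is immediate. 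In effect you have rederived, via characters, the standard criterion for equivalence of representations induced from an index-$2$ subgroup, which is arguably the cleaner argument, while the paper's elementary computation has the virtue of using nothing beyond the facts it states in \S\ref{ssec:prelim_1}.
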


\begin{proof}
First, assume $\pi_2= \pi_1$ or $\pi_2 = \pi_1 \circ f$. Then $\chi_{R_{\pi_1}} = \chi_{R_{\pi_2}}$, and hence, the representations $R_{\pi_1}$ and $R_{\pi_2}$ are equivalent. 

Conversely, assume $R_{\pi_1}$ and $R_{\pi_2}$ are equivalent. Then $\chi_{R_{\pi_1}} = \chi_{R_{\pi_2}}$, and so 
\begin{equation} \label{eq:pi}
\pi_1(a)+\pi_1(f(a))=\pi_2(a)+\pi_2(f(a)) \, \, \, (a \in A).
\end{equation} 
Fix $a \in A$. As $\pi_1(a)$ and $\pi_2(a)$ are roots of unity, \eqref{eq:pi} gives 
$$|1+\pi_1(f(a) a^{-1})|=|1+\pi_2(f(a)a^{-1})|,$$ 
and so 
\begin{equation} \label{eq:pi2}
\pi_1(f(a)a^{-1})=\overline{\pi_2(f(a) a^{-1})} \, \, \, {\rm{or}} \, \, \, \pi_1(f(a) a^{-1})=\pi_2(f(a) a^{-1}).
\end{equation}

First, assume $\pi_1(f(a) a^{-1}) \not= -1$. If $\pi_1(f(a)a^{-1})=\overline{\pi_2(f(a) a^{-1})}$, then, using \eqref{eq:pi}, we get $$\pi_1(a)+\pi_1(f(a))= \pi_2(a) + \pi_2(f(a)) = \pi_2(a)+\pi_2(a)\overline{\pi_1(f(a)a^{-1})}.$$ 
We then have
$$\pi_1(a) = \pi_2(a) \frac{1 + \overline{\pi_1(f(a)a^{-1})}}{1 + \pi_1(f(a)a^{-1})} = \pi_2(a) \frac{1}{\pi_1(f(a)a^{-1})},$$
and so $\pi_2(a) =  \pi_1(f(a))$. If $\pi_1(f(a) a^{-1}) \not=\overline{\pi_2(f(a) a^{-1})}$, then, by \eqref{eq:pi2}, we have $\pi_1(f(a) a^{-1}) = \pi_2(f(a) a^{-1})$ and, using \eqref{eq:pi} once more, we get 
$$\pi_1(a)+\pi_1(f(a)) = \pi_2(a)+\pi_2(f(a)) =\pi_2(a)+\pi_2(a)\pi_1(f(a)a^{-1}).$$ 
In particular, $\pi_2(a)=\pi_1(a)$. 

Now, assume $\pi_1(f(a) a^{-1}) = -1$. Then $\pi_1(f(a^2) a^{-2}) = 1$ and, from the previous paragraph (applied with $a^2$ instead of $a$), we get either $\pi_2(a^2) = \pi_1(a^2)$ or $\pi_2(a^2) = \pi_1(f(a^2))$. In particular, at least one of the following four conditions holds:

\noindent
$\bullet$ $\pi_2(a) = \pi_1(a)$,

\noindent
$\bullet$ $\pi_2(a) = - \pi_1(a)$,

\noindent
$\bullet$ $\pi_2(a) = \pi_1(f(a))$,

\noindent
$\bullet$ $\pi_2(a) = - \pi_1(f(a))$.

\noindent
First, assume $\pi_2(a) = - \pi_1(f(a))$. Then $\pi_2(a) = - \pi_1(a) \pi_1(f(a)a^{-1}) = \pi_1(a)$, since we have assumed $\pi_1(f(a)a^{-1})= -1$ at the beginning of the paragraph. Now, assume $\pi_2(a) = - \pi_1(a)$. Then $\pi_1(f(a)) = \pi_1(f(a)a^{-1}) \pi_1(a) = - \pi_1(a) = \pi_2(a).$

Hence, from the previous two paragraphs, for $a \in A$, either $\pi_2(a)=\pi_1(a)$ or $\pi_2(a)=\pi_1(f(a))$. We now consider a case distinction. If $\pi_2(a)=\pi_1(a)$ for all $a \in A$, then $\pi_2=\pi_1$. Else, there is $a \in A$ with $\pi_1(a)\neq \pi_1(f(a))$ and $\pi_2(a)=\pi_1(f(a))$. In this case we claim that $\pi_2 = \pi_1 \circ f$. Indeed, assume there is $b \in A$ with $\pi_2(b) \neq \pi_1(f(b))$, so $\pi_2(b)=\pi_1(b)$. If $\pi_2(ab)=\pi_1(ab)$, then  $$\pi_1(a)\pi_1(b)=\pi_1(ab)=\pi_2(ab)=\pi_2(a)\pi_2(b)=\pi_1(f(a)) \pi_1(b),$$ 
so $\pi_1(a)=\pi_1(f(a))$, a contradiction. If $\pi_2(ab)=\pi_1(f(ab))$, a similar computation leads to $\pi_2(a)\pi_2(b)=\pi_2(a) \pi_1(f(b))$ and so $\pi_2(b)=\pi_1(f(b))$, which cannot happen either. 
\end{proof}

As there are exactly $|A|-(A:B)$ inequivalent one-dimensional representations $\pi : A \rightarrow \mathbb{C}^*$ of $A$ with $B \not \subseteq {\rm{ker}}(\pi)$, we then deduce from Lemmas \ref{lem:irreducible} and \ref{lemma:conj} that $G$ has at least $(|A|-(A:B))/2$ inequivalent two-dimensional irreducible representations.

\vspace{2mm}

\noindent
(3) {\it{Conclusion.}} Since 
$$1^2(2(A:B))+2^2\left( \frac{|A|-(A:B)}{2} \right)=2|A|=|G|,$$ 
we get from \eqref{eq:squares} that $G$ has exactly $(|A|-(A:B))/2$ inequivalent two-dimensional irreducible representations, and that $G$ has no further irreducible representation, up to equivalence. Therefore, we have proved the following result:

\begin{theorem} \label{thm:classification}
The group $G$ has exactly $2(A:B) + (|A| - (A:B))/2$ inequivalent irreducible representations 
$$\rho_1, \dots, \rho_{(A:B)}, \rho_{(A:B)+1}, \dots, \rho_{2(A:B)}, \rho_{2(A:B) + 1}, \dots, \rho_{2(A:B) + (|A| - (A:B))/2},$$
which are as follows.

\vspace{1mm}

\noindent
{\rm{(1)}} Let $\pi_1, \dots, \pi_{(A:B)}$ be the inequivalent irreducible representations of $A/B$. Then, given $j \in \{1, \dots, (A:B)\}$, we have

\vspace{0.5mm}

\noindent
$\bullet$ $\rho_j(x) = \sqrt{\pi_j(\overline{y})}$ and $\rho_{j}(a)=\pi_j(\overline{a})$ for $a \in A$,

\vspace{0.5mm}

\noindent
$\bullet$ $\rho_{j + (A:B)}(x) = - \sqrt{\pi_j(\overline{y})}$ and $\rho_{j + (A:B)}(a)=\pi_j(\overline{a})$ for $a \in A$,

\vspace{0.5mm}

\noindent
where $\overline{a}$ and $\sqrt{\pi(\overline{y})}$ are the reductions modulo $B$ of $a \in A$ and a given squareroot of $\pi(\overline{y})$ in $\mathbb{C}$, respectively.

\vspace{1mm}

\noindent
{\rm{(2)}} Let $\psi_{1}, \dots, \psi_{|A| - (A:B)}$ be the inequivalent irreducible representations of $A$ whose kernels do not contain $B$. Assume $\psi_{j + (|A| - (A:B))/2} = \psi_j \circ f$ for $j \in \{1, \dots, (|A| - (A:B))/2\}$. Then, given $j \in \{1, \dots, (|A| - (A:B))/2\}$, we have
$$\rho_{2(A:B) + j}(a)= \begin{pmatrix} \psi_j(a) & 0 \\ 0 & \psi_j(f(a))  \end{pmatrix} \quad {\rm{and}} \, \, \, \rho_{2(A:B) + j}(xa) = \begin{pmatrix} 0 & \psi_j(yf(a)) \\ \psi_j(a) & 0 \end{pmatrix} \, \, \, (a \in A).$$
\end{theorem}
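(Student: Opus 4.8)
The plan is to assemble the three strands developed above---one-dimensional representations, induced two-dimensional representations, and a dimension count---into a single classification, and then to invoke the identity \eqref{eq:squares} to certify that nothing has been missed. The heavy lifting is already contained in Lemmas \ref{lem:irreducible} and \ref{lemma:conj}, so the remaining work is mostly organization and bookkeeping.

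First I would dispose of the one-dimensional representations. Given any one-dimensional $\rho : G \to \mathbb{C}^*$, the defining relation $xax^{-1}=f(a)$ forces $\rho(f(a))=\rho(a)$, whence $\rho$ is trivial on $B=\{f(a)a^{-1} : a \in A\}$ and factors through $A/B$. Conversely, a one-dimensional $\pi$ of $A/B$ prescribes $\rho$ on $A$, while $\rho(x)$ is constrained only by $\rho(x)^2=\rho(y)=\pi(\overline{y})$; the two square roots $\pm\sqrt{\pi(\overline{y})}$ give the two inequivalent lifts $\rho_j$ and $\rho_{j+(A:B)}$ recorded in part (1). Since $A/B$ has exactly $(A:B)$ one-dimensional representations, this produces precisely $2(A:B)$ inequivalent one-dimensional representations of $G$.

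Next I would treat the two-dimensional representations. For each one-dimensional $\pi$ of $A$ with $B \not\subseteq \ker(\pi)$, the induced representation $R_\pi=\mathrm{Ind}_{A}^{G}(\pi)$ of \eqref{equmatrepinds} is irreducible by Lemma \ref{lem:irreducible}, and Lemma \ref{lemma:conj} identifies $R_{\pi_1}\cong R_{\pi_2}$ exactly when $\pi_2\in\{\pi_1,\pi_1\circ f\}$. The one point requiring care is that the involution $\pi\mapsto\pi\circ f$ has no fixed point among these characters: if $\pi=\pi\circ f$ then $\pi(f(a)a^{-1})=1$ for all $a\in A$, i.e. $B\subseteq\ker(\pi)$, contrary to hypothesis. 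Thus $\langle f\rangle$ acts freely on the $|A|-(A:B)$ characters nontrivial on $B$, splitting them into exactly $(|A|-(A:B))/2$ orbits, each orbit contributing a single two-dimensional irreducible representation, which yields part (2).

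Finally I would close the count. The squared dimensions of the representations exhibited so far total $1^2\cdot 2(A:B)+2^2\cdot(|A|-(A:B))/2=2|A|=|G|$, so by \eqref{eq:squares} these are all the inequivalent irreducible representations of $G$, and the lower bounds obtained above are in fact attained. Since the genuine analytic input (that $R_\pi$ is a representation, its irreducibility, and the equivalence criterion) is already packaged in Lemmas \ref{lem:irreducible} and \ref{lemma:conj}, the only step demanding real attention is the freeness of the $f$-action: it is precisely what pins the orbit count at $(|A|-(A:B))/2$ and thereby lets the sum of squares equal $|G|$ rather than merely bound it, forcing the classification to be complete.
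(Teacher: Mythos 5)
Your proposal is correct and follows essentially the same route as the paper: one-dimensional representations factor through $A/B$ and admit exactly the two lifts $\rho(x)=\pm\sqrt{\pi(\overline{y})}$, the two-dimensional ones are the induced representations $R_\pi$ handled by Lemmas \ref{lem:irreducible} and \ref{lemma:conj}, and the identity \eqref{eq:squares} certifies completeness. Your one addition is the direct check that $\pi\mapsto\pi\circ f$ is fixed-point-free on the characters nontrivial on $B$; the paper instead records only the lower bound of $(|A|-(A:B))/2$ inequivalent two-dimensional representations and lets the sum-of-squares count force equality (hence freeness) a posteriori, so the two arguments differ only in this bookkeeping detail.
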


\subsubsection{Proof of Theorem \ref{thm:main}} \label{sssec:main_2}

We now proceed to the proof of our main result.

\vspace{2mm}

\noindent
(a) {\it{One-dimensional representations.}} Given a one-dimensional representation $\rho : G \rightarrow \mathbb{C}^*$ of $G$, we have
$$\rho(S \setminus T) = \rho(S_1) + \rho(x) \rho(S_2),$$ 
$${\bf i} \rho(T_1) - {\bf i} \rho(T_1^{-1}) = {\bf i} \rho(T_1) + \overline{{\bf i} \rho(T_1)} = 2 {\rm{Re}}({\bf i} \rho(T_1)) = -2 {\rm{Im}}(\rho(T_1)),$$
$${\bf i} \rho(xT_2) - {\bf i} \rho((xT_2)^{-1}) = {\bf i} \rho(xT_2) + \overline{{\bf i} \rho(xT_2)} = - 2 {\rm{Im}}(\rho(x) \rho(T_2)).$$
Hence, $\rho(S \setminus T) + {\bf i} \rho(T) - {\bf i} \rho(T^{-1}) \in \mathbb{Z}$ for every one-dimensional representation $\rho : G \rightarrow \mathbb{C}^*$ of $G$ if and only if (1) holds.

\vspace{2mm}

\noindent
(b) {\it{Two-dimensional representations.}} Let $\pi : A \rightarrow \mathbb{C}^*$ be a one-dimensional re\-pre\-sen\-tation of $A$ with $B \not \subseteq {\rm{ker}}(\pi)$, and let $R_\pi : G \rightarrow {\rm{GL}}_2(\mathbb{C})$ be as in \eqref{equmatrepinds}. First, we have
$$\begin{array}{lllll}
\displaystyle{\sum_{a \in S \setminus T}R_\pi(a)}&=&\displaystyle{ \sum_{a \in S_1}R_\pi(a)+\sum_{a \in S_2}R_\pi(xa)}
                  &=& \displaystyle{ \sum_{a \in S_1} \begin{pmatrix} \pi(a) & 0 \\ 0 & \pi(f(a)) \end{pmatrix} + \sum_{a \in S_2} \begin{pmatrix} 0 & \pi(yf(a)) \\ \pi(a) & 0 \end{pmatrix} }\\
                  &&&= & \begin{pmatrix} \pi(S_1) & 0 \\
                  0 & \pi(f(S_1))\end{pmatrix} + \begin{pmatrix} 0 & \pi(yf(S_2)) \\ \pi(S_2) & 0 \end{pmatrix} \\
                  &&&= & \begin{pmatrix} \pi(S_1) & \pi(yf(S_2)) \\
                  \pi(S_2) & \pi(f(S_1))\end{pmatrix} 
\end{array}    $$
and, similarly, 
$$\sum_{a \in T}R_\pi(a) = \begin{pmatrix} \pi(T_1) & \pi(yf(T_2)) \\
                  \pi(T_2) & \pi(f(T_1))\end{pmatrix}.$$   
Moreover, given $a \in A$, we have $(xa)^{-1} = a^{-1} x^{-1} = x^{-1} f(a^{-1}) = x y^{-1} f(a^{-1}) \in xA$. Combined with $(S \setminus T)^{-1} = S \setminus T$, this yields $xS_2 = (xS_2)^{-1} = x y^{-1} f(S_2^{-1})$, i.e., $f(S_2^{-1}) = y S_2$. Since $f$ has order 2 and fixes $y$, we eventually get $S_2^{-1} = yf(S_2)$. In particular, we have
$$ \sum_{a \in S \setminus T}R_\pi(a) = \begin{pmatrix} \pi(S_1) & \pi(S_2^{-1}) \\
                  \pi(S_2) & \pi(f(S_1))\end{pmatrix}.$$
Furthermore, using once more $(xa)^{-1} = x^{-1} f(a^{-1})$ for $a \in A$, we have
\begin{align*}
\sum_{a \in T}R_\pi(a^{-1})&=\sum_{a \in T_1}R_\pi(a^{-1})+\sum_{a \in T_2}R_\pi(x^{-1}f(a^{-1}))\\
                  &=\sum_{a \in T_1} \begin{pmatrix} \pi(a^{-1}) & 0 \\ 0 & \pi(f(a^{-1})) \end{pmatrix} + \sum_{a \in T_2} \begin{pmatrix} 0 & 1 \\ \pi(y^{-1}) & 0 \end{pmatrix} \begin{pmatrix} \pi(f(a^{-1})) & 0 \\ 0 & \pi(f(f(a^{-1}))) \end{pmatrix}\\
                  &= \begin{pmatrix} \pi(T_1^{-1}) & 0 \\
                  0 & \pi(f(T_1^{-1}))\end{pmatrix} + \sum_{a \in T_2} \begin{pmatrix} 0 & 1 \\ \pi(y^{-1}) & 0 \end{pmatrix} \begin{pmatrix} \pi(f(a^{-1})) & 0 \\ 0 & \pi(a^{-1}) \end{pmatrix}\\
                  &= \begin{pmatrix} \pi(T_1^{-1}) & 0 \\
                  0 & \pi(f(T_1^{-1}))\end{pmatrix} + \sum_{a \in T_2} \begin{pmatrix} 0 & \pi(a^{-1}) \\ \pi(y^{-1}f(a^{-1})) & 0 \end{pmatrix}  \\
& = \begin{pmatrix} \pi(T_1^{-1}) & 0 \\
                  0 & \pi(f(T_1^{-1}))\end{pmatrix} + \begin{pmatrix} 0 & \pi(T_2^{-1}) \\
                  \pi(y^{-1} f(T_2^{-1})) & 0 \end{pmatrix} \\
                  &=\begin{pmatrix} \pi(T_1^{-1}) & \pi(T_2^{-1}) \\
                  \pi(y^{-1} f(T_2^{-1})) & \pi(f(T_1^{-1}))\end{pmatrix}.
\end{align*}
Hence, 
\begin{equation} \label{eq:rep}
\sum_{a \in S \setminus T} R_\pi(a) + {\bf i} \sum_{a \in T} R_\pi(a) - {\bf i} \sum_{a \in T} R_\pi(a^{-1})
\end{equation}
equals
$$\begin{pmatrix} \pi(S_1) + {\bf i} (\pi(T_1) - \pi(T_1^{-1})) & \pi(S_2^{-1}) + {\bf i}(\pi(yf(T_2)) - \pi(T_2^{-1})) \\ \pi(S_2) + {\bf i}(\pi(T_2) - \pi(y^{-1}f(T_2^{-1}))) & \pi(f(S_1)) + {\bf i}(\pi(f(T_1)) - \pi(f(T_1^{-1}))) \end{pmatrix}  \\.$$
Using \eqref{eq:cinq1} and \eqref{eq:epsilon}, we get that \eqref{eq:rep} equals
$$\begin{pmatrix} \pi(S_1) + {\bf i} \alpha(\pi) & \pi(S_2^{-1}) +{\bf i} \beta(\pi) \\ \pi(S_2) - {\bf i} \overline{\beta(\pi)} & \pi(f(S_1)) + {\bf i} \gamma(\pi) \end{pmatrix}  \\,$$
whose characteristic polynomial equals $X^2 - \delta(\pi) X + \epsilon(\pi).$ Hence, the eigenvalues of  \eqref{eq:rep} are 
$$\lambda_{1}= \frac{\delta(\pi) - \sqrt{\delta(\pi)^2 - 4 \epsilon(\pi)}}{2} \, \, \, {\rm{and}} \, \, \, \lambda_{2}=\frac{\delta(\pi) + \sqrt{\delta(\pi)^2 - 4 \epsilon(\pi)}}{2}.$$
If $\lambda_1, \lambda_2$ are in $\mathbb{Z}$, then $\delta(\pi)=\lambda_1+\lambda_2$ is in $\mathbb{Z}$ and $\sqrt{\delta(\pi)^2 - 4 \epsilon(\pi)} = \lambda_2 - \lambda_1 \in \mathbb{Z}$. Conversely, assume $\delta(\pi) \in \mathbb{Z}$ and $\sqrt{\delta(\pi)^2 - 4 \epsilon(\pi)} \in \mathbb{Z}$. Then $4 \epsilon(\pi) = \delta(\pi)^2 - (\sqrt{\delta(\pi)^2 - 4 \epsilon(\pi)})^2 \in \mathbb{Z}$. In particular, we have $\epsilon(\pi) \in \mathbb{Q}$. Since $\epsilon(\pi)$ is also integral over $\mathbb{Z}$, we actually have $\epsilon(\pi) \in \mathbb{Z}$. Hence, $\delta(\pi)$ and $\sqrt{\delta(\pi)^2 - 4 \epsilon(\pi)}$ have the same parity, and we get $\lambda_1, \lambda_2 \in \mathbb{Z}$. 

In view of the classification from Theorem \ref{thm:classification}, we then get that the eigenvalues of 
$$\sum_{s \in S \setminus T} \rho(s) + {\bf i} \sum_{s \in T} \rho(s) - {\bf i} \sum_{s \in T} \rho(s^{-1})$$
are in $\mathbb{Z}$ for every irreducible two-dimensional representation $\rho$ of $G$ if and only if (2) holds.

\subsection{Corollaries} \label{ssec:main_3}

The statement of Theorem \ref{thm:main} is technical because we consider mixed Cayley graphs over arbitrary non-abelian finite groups admitting an abelian subgroup of index 2. As illustrations of our main result, we now consider two special cases of interest.

We first restrict to the undirected situation, in which case Theorem \ref{thm:main} takes the next simpler form:

\begin{corollary} \label{coro:undirected}
Let $S$ be a subset of $G$ with $1 \not \in S$ and $S^{-1}=S$. Set $S = S_1 \cup x S_2$ with $S_1, S_2 \in A$. Then the undirected Cayley graph ${\rm{Cay}}(G,S)$ is integral if and only if the next two conditions hold:

\vspace{0.5mm}

\noindent
{\rm{(1)}} $\rho(S_1) + \rho(x) \rho(S_2) \in \mathbb{Z}$ for every one-dimensional representation $\rho : G \rightarrow \mathbb{C}^*$ of $G$,

\vspace{0.5mm}

\noindent
{\rm{(2)}} for every one-dimensional representation $\pi : A \rightarrow \mathbb{C}^*$ with $B \not \subseteq {\rm{ker}}(\pi)$, we have $\pi(f(S_1)) + \pi(S_1) \in \mathbb{Z}$ and there exists $\zeta \in \mathbb{Z}$ such that 
$$(\pi(f(S_1)) - \pi(S_1))^2 + 4 \pi(S_2) \pi(S_2^{-1}) = \zeta^2.$$
\end{corollary}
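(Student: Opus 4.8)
The plan is to derive this corollary as a direct specialization of Theorem \ref{thm:main} to the undirected case. Since $S^{-1} = S$, the set $T = \{s \in S : s^{-1} \not\in S\}$ is empty, so that $S \setminus T = S = S_1 \cup x S_2$ while $T_1 = T_2 = \emptyset$. The strategy is then simply to substitute $T_1 = T_2 = \emptyset$ into the two conditions of Theorem \ref{thm:main} and to simplify each of them. No new idea beyond the main theorem is required.

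For condition (1), given a one-dimensional representation $\rho : G \rightarrow \mathbb{C}^*$, both $\rho(T_1)$ and $\rho(T_2)$ are empty sums, hence equal to $0$, so the two imaginary-part terms $-2{\rm{Im}}(\rho(T_1))$ and $-2{\rm{Im}}(\rho(x)\rho(T_2))$ in condition (1) of Theorem \ref{thm:main} vanish. What remains is precisely $\rho(S_1) + \rho(x)\rho(S_2) \in \mathbb{Z}$, which is condition (1) of the corollary. For condition (2), I would first evaluate the auxiliary quantities in \eqref{eq:cinq1} and \eqref{eq:epsilon} under $T_1 = T_2 = \emptyset$. Each of $\alpha(\pi)$, $\beta(\pi)$, and $\gamma(\pi)$ is a difference of empty sums, hence $\alpha(\pi) = \beta(\pi) = \gamma(\pi) = 0$; consequently $\delta(\pi) = \pi(f(S_1)) + \pi(S_1)$ and $\epsilon(\pi) = \pi(S_1)\pi(f(S_1)) - \pi(S_2)\pi(S_2^{-1})$. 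The requirement $\delta(\pi) \in \mathbb{Z}$ of Theorem \ref{thm:main} thus becomes $\pi(f(S_1)) + \pi(S_1) \in \mathbb{Z}$, matching the first half of condition (2) of the corollary.

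It then remains to expand $\delta(\pi)^2 - 4\epsilon(\pi)$ and to recognize it as $(\pi(f(S_1)) - \pi(S_1))^2 + 4\pi(S_2)\pi(S_2^{-1})$: the cross term $2\pi(S_1)\pi(f(S_1))$ arising from the square combines with $-4\pi(S_1)\pi(f(S_1))$ coming from $-4\epsilon(\pi)$ to produce the squared difference, while the term $+4\pi(S_2)\pi(S_2^{-1})$ survives. Hence the clause ``there exists $\zeta \in \mathbb{Z}$ with $\delta(\pi)^2 - 4\epsilon(\pi) = \zeta^2$'' is exactly the second half of condition (2). There is essentially no serious obstacle here, since the corollary is a transparent special case of the main theorem; the only point requiring a little care is tracking the four terms of the square against the definition of $\epsilon(\pi)$ in the expansion above. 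As a consistency check one may also note that $\pi(S_2^{-1}) = \overline{\pi(S_2)}$, so $4\pi(S_2)\pi(S_2^{-1}) = 4|\pi(S_2)|^2$ is real and nonnegative, in agreement with $\delta(\pi)^2 - 4\epsilon(\pi)$ being a real number.
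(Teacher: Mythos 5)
Your proof is correct and matches the paper's approach exactly: the paper states Corollary \ref{coro:undirected} without a separate argument, treating it as the immediate specialization of Theorem \ref{thm:main} to $T=\emptyset$ (forced by $S^{-1}=S$), and your substitution $\alpha(\pi)=\beta(\pi)=\gamma(\pi)=0$, $\delta(\pi)=\pi(f(S_1))+\pi(S_1)$, $\epsilon(\pi)=\pi(S_1)\pi(f(S_1))-\pi(S_2)\pi(S_2^{-1})$, together with the expansion of $\delta(\pi)^2-4\epsilon(\pi)$, is precisely that verification. Your closing consistency check that $4\pi(S_2)\pi(S_2^{-1})=4|\pi(S_2)|^2\geq 0$ is a nice touch, though not needed.
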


The following corollary allows to generate integral undirected Cayley graphs over $G$:

\begin{corollary} \label{coro:simple}
 Let $S_1, S_2 \in \mathbb{B}(A)$ be such that $1 \not \in S_1$, $f(S_1) = S_1$ and $f(S_2) = y^{-1}S_2$. Then the un\-di\-rected Cayley graph ${\rm{Cay}}(G, S_1 \cup xS_2)$ is integral.
\end{corollary}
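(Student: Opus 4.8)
The plan is to deduce everything from Corollary \ref{coro:undirected}, after first checking that $S := S_1 \cup xS_2$ is an admissible subset, i.e., that $1 \notin S$ and $S^{-1} = S$. Since $S_1, S_2 \in \mathbb{B}(A)$, Lemma \ref{lemma:ap_0} applied with $j = -1$ gives $S_1^{-1} = S_1$ and $S_2^{-1} = S_2$. As $1 \notin S_1$ by hypothesis and $xS_2 \subseteq xA$ is disjoint from $A$, we have $1 \notin S$. For the symmetry, I would reuse the identity $(xa)^{-1} = xy^{-1}f(a^{-1})$ (established in the proof of Theorem \ref{thm:main}) to get $(xS_2)^{-1} = xy^{-1}f(S_2)^{-1}$; the hypothesis $f(S_2) = y^{-1}S_2$ together with $S_2^{-1} = S_2$ then yields $f(S_2)^{-1} = yS_2$, whence $(xS_2)^{-1} = xS_2$ and so $S^{-1} = S$. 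Thus ${\rm{Cay}}(G,S)$ is a genuine undirected Cayley graph and Corollary \ref{coro:undirected} applies with this decomposition $S = S_1 \cup xS_2$.

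Next I would verify condition (2) of Corollary \ref{coro:undirected}, which is the routine part. Because $S_1, S_2 \in \mathbb{B}(A)$, Lemma \ref{lemma:ap} shows that $S_1$ and $S_2$ are integral subsets of $A$, so $\pi(S_1), \pi(S_2) \in \mathbb{Z}$ for every one-dimensional representation $\pi$ of $A$. The set equality $f(S_1) = S_1$ gives $\pi(f(S_1)) = \pi(S_1)$, and $S_2^{-1} = S_2$ gives $\pi(S_2^{-1}) = \pi(S_2)$. Hence $\pi(f(S_1)) + \pi(S_1) = 2\pi(S_1) \in \mathbb{Z}$ and
$$(\pi(f(S_1)) - \pi(S_1))^2 + 4\pi(S_2)\pi(S_2^{-1}) = 4\pi(S_2)^2 = (2\pi(S_2))^2,$$
so that $\zeta = 2\pi(S_2) \in \mathbb{Z}$ works; this holds for every such $\pi$, in particular for those with $B \not\subseteq {\rm{ker}}(\pi)$.

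The heart of the argument is condition (1), and this is where the hypothesis $f(S_2) = y^{-1}S_2$ is genuinely used. Fix a one-dimensional representation $\rho : G \to \mathbb{C}^*$ and put $\chi = \rho|_A$, a character of $A$. By the classification of the one-dimensional representations of $G$ in \S\ref{sssec:main_1}, $\chi$ is trivial on $B$, and moreover $\rho(x)^2 = \rho(y) = \chi(y)$. Since $\chi$ is trivial on $B = \{f(a)a^{-1} : a \in A\}$, we have $\chi \circ f = \chi$; applying $\chi$ to the set identity $f(S_2) = y^{-1}S_2$ therefore gives $\chi(S_2) = \chi(y)^{-1}\chi(S_2)$, i.e. $(1 - \chi(y)^{-1})\chi(S_2) = 0$. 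I expect this dichotomy to be the crux of the proof. Indeed, either $\chi(S_2) = 0$, in which case $\rho(x)\rho(S_2) = \rho(x)\chi(S_2) = 0$; or $\chi(y) = 1$, in which case $\rho(x)^2 = 1$, so that $\rho(x) = \pm 1$ and $\rho(x)\rho(S_2) = \rho(x)\chi(S_2) \in \mathbb{Z}$ (using that $\chi(S_2) \in \mathbb{Z}$ by integrality of $S_2$). In both cases $\rho(S_1) + \rho(x)\rho(S_2) = \chi(S_1) + \rho(x)\chi(S_2) \in \mathbb{Z}$, which is condition (1). The only real obstacle is controlling the possibly non-real scalar $\rho(x)$, a square root of the root of unity $\chi(y)$; the hypothesis on $S_2$ is exactly what forces $\rho(x)$ to equal $\pm 1$ in precisely the cases where the $S_2$-contribution does not already vanish. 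Combining the two verified conditions with Corollary \ref{coro:undirected} then gives the integrality of ${\rm{Cay}}(G, S_1 \cup xS_2)$.
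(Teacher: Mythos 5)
Your proposal is correct and follows essentially the same path as the paper's proof: well-definedness of the undirected Cayley graph via Lemma \ref{lemma:ap_0} and the identity $(xa)^{-1}=xy^{-1}f(a^{-1})$, then verification of conditions (1) and (2) of Corollary \ref{coro:undirected} using Lemma \ref{lemma:ap}, with condition (2) handled identically. The only local difference is in condition (1): the paper deduces from $(xS_2)^{-1}=xS_2$ that $\rho(x)\rho(S_2)$ is real, hence $\rho(x)\in\{-1,1\}$ whenever $\rho(S_2)\neq 0$, whereas you apply $\chi=\rho|_A$ (trivial on $B$, so $\chi\circ f=\chi$) directly to the hypothesis $f(S_2)=y^{-1}S_2$ to obtain the equivalent dichotomy $\chi(S_2)=0$ or $\chi(y)=1$, forcing $\rho(x)=\pm 1$ in the latter case --- a valid variant of the same argument.
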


\begin{proof}
First, the undirected Cayley graph ${\rm{Cay}}(G, S_1 \cup xS_2)$ is well-defined. Indeed, since $S_1 \in \mathbb{B}(A)$, Lem\-ma \ref{lemma:ap_0} yields that $S_1^{-1}$ and $S_1$ coincide. Similarly, we have $S_2^{-1} = S_2$ and, as $f(S_2) = y^{-1}S_2$, we get
\begin{equation} \label{eq:inverse}
(xS_2)^{-1} = S_2^{-1} x^{-1} = x^{-1} f(S_2^{-1}) = x y^{-1} f(S_2^{-1}) = xy^{-1} y S_2^{-1} = xS_2^{-1} = xS_2,
\end{equation}
thus yielding $(S_1 \cup xS_2)^{-1} = S_1 \cup xS_2$, as needed.

Now, to get Corollary \ref{coro:simple}, it suffices to show that (1) and (2) in Corollary \ref{coro:undirected} hold. First, let $\rho : G \rightarrow \mathbb{C}^*$ be a one-dimensional representation of $G$. To get (1), we have to show $\rho(S_1) + \rho(x) \rho(S_2) \in \mathbb{Z}$. To that end, note that, as $S_j \in \mathbb{B}(A)$ for $j \in \{1,2\}$, we have $\rho(S_j) \in \mathbb{Z}$ by Lemma \ref{lemma:ap}. In particular, if $\rho(S_2) = 0$, then $\rho(S_1) + \rho(x) \rho(S_2) \in \mathbb{Z}$. Therefore, assume $\rho(S_2) \not=0$. Then, by \eqref{eq:inverse}, we have $\rho(x) \rho(S_2) = \rho((xS_2)^{-1}) = \overline{\rho(x) \rho(S_2)}$, i.e., $\rho(x) \rho(S_2) \in \mathbb{R}$. Since $\rho(S_2) \in \mathbb{Z} \setminus \{0\}$, we get $\rho(x) \in \mathbb{R}$. But $\rho(x)$ is also a root of unity, and hence, $\rho(x) \in \{-1, 1\}$, and so $\rho(x) \rho(S_2) \in \mathbb{Z}$. In particular, (1) holds.

Now, let $\pi : A \rightarrow \mathbb{C}^*$ be a one-dimensional representation of $A$ with $B \not \subseteq {\rm{ker}}(\pi)$. Since $f(S_1) = S_1$, we have $\pi(f(S_1)) + \pi(S_1) = 2 \pi(S_1)$ and, as $S_1 \in \mathbb{B}(A)$, we have $\pi(S_1) \in \mathbb{Z}$ by Lemma \ref{lemma:ap}. Therefore, $\pi(f(S_1)) + \pi(S_1) \in \mathbb{Z}$. Moreover, since $S_2^{-1} = S_2$, we have
$$(\pi(f(S_1)) - \pi(S_1))^2 + 4 \pi(S_2) \pi(S_2^{-1}) = 4 \pi(S_2)^2$$
and, as $S_2 \in \mathbb{B}(A)$, we have $\pi(S_2) \in \mathbb{Z}$ by Lemma \ref{lemma:ap}. Hence, (2) holds.
\end{proof}

\begin{example} \label{ex:1}
Assume that $A$ has exponent $e$, where $e \geq 3$, and that there is an integer $s$ of order 2 modulo $e$ such that, for every $a \in A$, we have $f(a) = a^s$. Then, in the statement of Corollary \ref{coro:simple}, the assumption ``$f(S_1) = S_1$" is redundant and the one ``$f(S_2) = y^{-1} S_2$" reduces to ``$yS_2 = S_2$". Indeed, from our extra assumption on $f$, we have $f(S_j) = S_j^s$ for $j\in\{ 1,2\}$ and, since $s$ is coprime\footnote{In fact, since $s^2 \equiv 1 \pmod{e}$, we have ${\rm{gcd}}(s,e)=1$.} to every element order in $A$, we may then apply Lemma \ref{lemma:ap_0} to get $f(S_j) = S_j^s = S_j$ for $j \in \{ 1,2\}$.

In particular, if $A$ has exponent $e$, where $e \geq 3$, if there is an integer $s$ of order 2 modulo $e$ such that, for every $a \in A$, we have $f(a) = a^s$, and if $y=1$, then every undirected Cayley graph ${\rm{Cay}}(G, S_1 \cup xS_2)$, with $1 \not \in S_1$ and $S_1, S_2 \in \mathbb{B}(A)$, is integral. In addition to generalized dihedral groups (for which $s=-1$), the latter applies to, e.g., quasi-dihedral groups, which assume $A=\mathbb{Z}/2^{n-1}\mathbb{Z}$ ($n \geq 4$) and $s=2^{n-2}-1$, and modular groups, which assume $A=\mathbb{Z}/2^{n-1}\mathbb{Z}$ ($n \geq 4$) and $s=2^{n-2}+1$.
\end{example}

We now consider the case where $f(a) = a^{-1}$ for $a \in A$, in which case $B = A^2$ and $y^2=1$ (and $A$ has exponent at least 3). If $y=1$, then $G$ is the generalized dihedral group ${\rm{Dih}}(A)$, which is given by the following presentation:
$${\rm{Dih}}(A) = \langle A, x \, | \, x^2 = 1, xax^{-1} = a^{-1} \, (a \in A) \rangle.$$
If $y \not=1$ (and so $|A|$ is even), then $G$ is the generalized dicyclic group ${\rm{Dic}}(A,y)$, which is given by the following presentation:
$${\rm{Dic}}(A,y) = \langle A,x \, | \, x^2=y, xax^{-1}=a^{-1} \, (a \in A) \rangle.$$ 

The next corollary applies to both ${\rm{Dih}}(A)$ and ${\rm{Dic}}(A,y)$:

\begin{corollary} \label{coro:s=-1}
Assume $f(a) = a^{-1}$ for every $a \in A$. Let $S$ be a subset of $G$ with $1 \not \in S$, and let $T = \{s \in S \, : s^{-1} \not \in S\}.$ Set $S \setminus T = S_1 \cup x S_2$ and $T = T_1 \cup x T_2$ with $S_1, S_2, T_1, T_2 \subseteq A$. Then the mixed Cayley graph ${\rm{Cay}}(G,S)$ is integral if and only if the next three conditions hold:

\vspace{0.5mm}

\noindent
{\rm{(1)}} $S_1 \in \mathbb{B}(A)$,

\vspace{0.5mm}

\noindent
{\rm{(2)}} for every one-dimensional representation $\pi : A \rightarrow \mathbb{C}^*$ of $A$ with $A^2 \not \subseteq {\rm{ker}}(\pi)$ and $y \in {\rm{ker}}(\pi)$, there exists $\alpha \in \mathbb{Z}$ such that $\pi(S_2) \pi(S_2^{-1}) - (\pi(T_1) - \pi(T_1^{-1}))^2 = \alpha^2, $

\vspace{0.5mm}

\noindent
{\rm{(3)}} for every one-dimensional representation $\pi : A \rightarrow \mathbb{C}^*$ of $A$ with $A^2 \not \subseteq {\rm{ker}}(\pi)$ and $y \not \in {\rm{ker}}(\pi)$, there exists $\alpha \in \mathbb{Z}$ such that
$ 4 \pi(T_2) \pi(T_2^{-1}) - (\pi(T_1) - \pi(T_1^{-1}))^2 = \alpha^2. $
\end{corollary}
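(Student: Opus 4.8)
The plan is to obtain Corollary \ref{coro:s=-1} as the specialization of Theorem \ref{thm:main} to $f(a)=a^{-1}$, showing that in this case conditions (1) and (2) of Theorem \ref{thm:main} collapse to the three conditions of the corollary. First I would record the structural consequences of the hypothesis. We have $B=\{a^{-2}:a\in A\}=A^2$, and $f(y)=y$ forces $y^{-1}=y$, i.e.\ $y^2=1$, so that $\pi(y)\in\{-1,1\}$ for every one-dimensional representation $\pi$ of $A$; in particular the odd characters (those with $A^2\not\subseteq\ker(\pi)$) split according to $y\in\ker(\pi)$ or $y\notin\ker(\pi)$. Crucially, since $T=\{s\in S:s^{-1}\notin S\}$, the set $S\setminus T$ is symmetric, so $S_1=S_1^{-1}$ (whence each $\pi(S_1)$ is real) and, because $f$ inverts, the identity $f(S_2^{-1})=yS_2$ from the proof of Theorem \ref{thm:main} becomes $S_2=yS_2$. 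Hence $\pi(S_2)=\pi(y)\pi(S_2)$, so that $\pi(S_2)=0$ whenever $\pi(y)=-1$. This observation, which is what removes $S_2$ from condition (3), is the point I expect to be most important to get right.

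Next I would dispose of condition (1) of Theorem \ref{thm:main}. Any one-dimensional representation $\rho$ of $G$ restricts on $A$ to a character trivial on $B=A^2$, hence real-valued with values in $\{-1,1\}$; thus $\rho(S_1)\in\mathbb{Z}$ and ${\rm{Im}}(\rho(T_1))=0$. When $\rho(x)=\pm1$ (the case $\pi(y)=1$) every term is a real integer, while when $\rho(x)=\pm{\bf i}$ (the case $\pi(y)=-1$) the only possibly non-integral contribution is the purely imaginary $\rho(x)\rho(S_2)=\pm{\bf i}\,\pi(S_2)$, which vanishes by the previous paragraph. So condition (1) of Theorem \ref{thm:main} is automatic and all the content lies in condition (2).

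I would then simplify $\delta(\pi)$ and $\epsilon(\pi)$ from \eqref{eq:cinq1}--\eqref{eq:epsilon} under $f(a)=a^{-1}$. Here $\gamma(\pi)=-\alpha(\pi)$, so $\delta(\pi)=\pi(S_1^{-1})+\pi(S_1)=2\pi(S_1)$, which is real since $S_1=S_1^{-1}$; and $\beta(\pi)=(\pi(y)-1)\pi(T_2^{-1})$, which is $0$ when $\pi(y)=1$ and $-2\pi(T_2^{-1})$ when $\pi(y)=-1$. A direct determinant computation then gives, for the odd characters $\pi$,
$$\delta(\pi)^2-4\epsilon(\pi)=4\bigl(\pi(S_2)\pi(S_2^{-1})-(\pi(T_1)-\pi(T_1^{-1}))^2\bigr)\quad\text{if }\pi(y)=1,$$
and, using $\pi(S_2)=0$,
$$\delta(\pi)^2-4\epsilon(\pi)=4\bigl(4\pi(T_2)\pi(T_2^{-1})-(\pi(T_1)-\pi(T_1^{-1}))^2\bigr)\quad\text{if }\pi(y)=-1.$$

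Finally I would match condition (2) of Theorem \ref{thm:main} to conditions (1)--(3) of the corollary. For the ``$\delta(\pi)\in\mathbb{Z}$'' clause: since $\pi(S_1)$ is a real algebraic integer and $1/2$ is not an algebraic integer, $\delta(\pi)=2\pi(S_1)\in\mathbb{Z}$ is equivalent to $\pi(S_1)\in\mathbb{Z}$; combined with the automatic integrality of $\pi(S_1)$ for the even characters and with Lemma \ref{lemma:ap}, imposing this for all odd $\pi$ is exactly condition (1), $S_1\in\mathbb{B}(A)$. Granting this, $\delta(\pi)$ is even, so the parity argument from the proof of Theorem \ref{thm:main} (that $\delta(\pi)$ and $\sqrt{\delta(\pi)^2-4\epsilon(\pi)}$ share the same parity) shows that any $\zeta\in\mathbb{Z}$ with $\delta(\pi)^2-4\epsilon(\pi)=\zeta^2$ is even, say $\zeta=2\alpha$. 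Dividing the two displayed identities by $4$ then converts ``$\exists\,\zeta\in\mathbb{Z}$'' into ``$\exists\,\alpha\in\mathbb{Z}$'' with precisely the right-hand sides of condition (2) (case $y\in\ker(\pi)$) and condition (3) (case $y\notin\ker(\pi)$). The bookkeeping of this factor of $4$, together with the reduction $\pi(S_2)=0$ that eliminates $S_2$ when $y\notin\ker(\pi)$, is the only place I expect real care to be needed; both directions of the equivalence then follow by reading these identifications backwards.
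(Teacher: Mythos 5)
Your proposal is correct and follows essentially the same route as the paper's proof: specialize Theorem \ref{thm:main}, use $(S\setminus T)^{-1}=S\setminus T$ to obtain $S_1=S_1^{-1}$ and $yS_2=S_2$ (hence $\pi(S_2)=\pi(S_2^{-1})=0$ when $\pi(y)=-1$), observe that condition (1) of the theorem is automatic, simplify $\delta(\pi)$, $\beta(\pi)$, $\gamma(\pi)$, $\epsilon(\pi)$ to the same two discriminant identities, and invoke Lemma \ref{lemma:ap} for $S_1\in\mathbb{B}(A)$. The only cosmetic difference is in the final step: you first secure condition (1) so that $\delta(\pi)$ is even and then run the parity argument to write $\zeta=2\alpha$, whereas the paper applies the ``rational and integral over $\mathbb{Z}$, hence in $\mathbb{Z}$'' argument directly to $\sqrt{\pi(S_2)\pi(S_2^{-1})-(\pi(T_1)-\pi(T_1^{-1}))^2}$, which makes its equivalences (C) and (D) independent of (B) --- both variants are valid.
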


\begin{proof}
By Theorem \ref{thm:main}, it suffices to prove the following four statements, in which we use the no\-ta\-tion from our main result:

\vspace{1mm}

\noindent
(A) We have $\rho(S_1) + \rho(x) \rho(S_2) - 2 {\rm{Im}}(\rho(T_1)) - 2 {\rm{Im}}(\rho(x) \rho(T_2)) \in \mathbb{Z}$ for every one-dimensional representation $\rho :G \rightarrow \mathbb{C}^*$ of $G$.

\vspace{1mm}

\noindent
(B) The next two conditions are equivalent:

\vspace{0.5mm}

(i) $S_1 \in {\mathbb{B}}(A)$,

\vspace{0.5mm}

(ii) $\delta(\pi) \in \mathbb{Z}$ for every one-dimensional representation $\pi : A \rightarrow \mathbb{C}^*$ of $A$ with $A^2 \not \subseteq {\rm{ker}}(\pi)$.

\vspace{1mm}

\noindent
(C) The next two conditions are equivalent:

\vspace{0.5mm}

(i) for every one-dimensional representation $\pi : A \rightarrow \mathbb{C}^*$ of $A$ with $A^2 \not \subseteq {\rm{ker}}(\pi)$ and $y \in {\rm{ker}}(\pi)$, there 

exists $\zeta \in \mathbb{Z}$ such that $\delta(\pi)^2 - 4 \epsilon(\pi) = \zeta^2$,

\vspace{0.5mm}

(ii) for every one-dimensional representation $\pi : A \rightarrow \mathbb{C}^*$ of $A$ such that $A^2 \not \subseteq {\rm{ker}}(\pi)$ and $y \in {\rm{ker}}(\pi)$, 

there exists $\alpha \in \mathbb{Z}$ satisfying $\pi(S_2) \pi(S_2^{-1}) - (\pi(T_1) - \pi(T_1^{-1}))^2 = \alpha^2.$

\vspace{1mm}

\noindent
(D) The next two conditions are equivalent:

\vspace{0.5mm}

(i) for every one-dimensional representation $\pi : A \rightarrow \mathbb{C}^*$ of $A$ with $A^2 \not \subseteq {\rm{ker}}(\pi)$ and $y \not \in {\rm{ker}}(\pi)$, there 

exists $\zeta \in \mathbb{Z}$ such that $\delta(\pi)^2 - 4 \epsilon(\pi) = \zeta^2$,

\vspace{0.5mm}

(ii) for every one-dimensional representation $\pi : A \rightarrow \mathbb{C}^*$ of $A$ such that $A^2 \not \subseteq {\rm{ker}}(\pi)$ and $y \not \in {\rm{ker}}(\pi)$, 

there exists $\alpha \in \mathbb{Z}$ satisfying $ 4 \pi(T_2) \pi(T_2^{-1}) - (\pi(T_1) - \pi(T_1^{-1}))^2 = \alpha^2. $

\vspace{1.5mm}

\noindent
For later use, we observe that, since $(S \setminus T)^{-1} = S \setminus T$, we must have 
\begin{equation} \label{eq:s1}
S_1^{-1} = S_1 \, \, \, {\rm{and}} \, \, \,  yS_2^j = S_2^j \, \, \, (j \in \{-1,1\}).
\end{equation}
In particular, for every one-dimensional representation $\pi : A \rightarrow \mathbb{C}^*$ of $A$ with $y \not \in {\rm{ker}}(\pi)$ (and so $\pi(y)=-1$), and for $j \in \{-1, 1\}$, we must have $\pi(S_2^j) = \pi(yS_2^j) = \pi(y) \pi(S_2^j) = - \pi(S_2^j)$, i.e.,
\begin{equation} \label{eq:zero}
\pi(S_2^j) = 0.
\end{equation}

\noindent
Proof of (A): Let $\rho : G \rightarrow \mathbb{C}^*$ be a one-dimensional representation of $G$. Since $f(a) = a^{-1}$ for $a \in A$, we have $A^2 \subseteq {\rm{ker}}(\rho)$, and hence, $\rho(a) \in \{-1, 1\}$ for $a \in A$. Moreover, since $x^2=y$ and $y^2=1$, we have $\rho(x) \in \{-1, 1, -{\bf i}, {\bf i}\}$. If $\rho(x) \in \{-1, 1\}$, then 
$$\rho(S_1) + \rho(x) \rho(S_2) - 2 {\rm{Im}}(\rho(T_1)) - 2 {\rm{Im}}(\rho(x) \rho(T_2)) \in \mathbb{Z}.$$ 
Therefore, assume $\rho(x) \in \{-{\bf i},{\bf i}\}$. Then $\rho(S_2) = 0$ by \eqref{eq:zero}. Since we also have $\rho(x) \rho(T_2) \in \mathbb{Z}[{\bf i}]$, we get 
$$\rho(S_1) + \rho(x) \rho(S_2) - 2 {\rm{Im}}(\rho(T_1)) - 2 {\rm{Im}}(\rho(x) \rho(T_2)) \in \mathbb{Z},$$ 
thus ending the proof of (A).

\vspace{1.5mm}

\noindent
Proof of (B): Let $\pi : A \rightarrow \mathbb{C}^*$ be a one-dimensional representation of $A$ with $A^2 \not \subseteq {\rm{ker}}(\pi)$. Using \eqref{eq:s1} and our assumption on $f$, we can rewrite \eqref{eq:cinq1} and \eqref{eq:epsilon} as follows:
\begin{equation} \label{eq:cinq2}
\left\{
\begin{array}{lll}
\alpha(\pi) = \pi(T_1) - \pi(T_1^{-1}), \\
\beta(\pi) = \pi(T_2^{-1})(\pi(y)-1), \\
\gamma(\pi)= \pi(T_1^{-1}) - \pi(T_1) = - \alpha(\pi), \\
\delta(\pi) = 2 \pi(S_1), \\
\epsilon(\pi) = \pi(S_1)^2 - \pi(S_2) \pi(S_2^{-1}) + {\bf i} (\overline{\beta(\pi)} \pi(S_2^{-1}) - \beta(\pi) \pi(S_2)) + \alpha(\pi)^2 - \beta(\pi) \overline{\beta(\pi)}. \\
\end{array}
\right.
\end{equation}
If $\delta(\pi) \in \mathbb{Z}$, then, by \eqref{eq:cinq2}, we have $\pi(S_1) \in \mathbb{Q}$. Since $\pi(S_1)$ is integral over $\mathbb{Z}$, we get that $\pi(S_1)$ is actually in $\mathbb{Z}$. On the other hand, we have $\pi(S_1) \in \mathbb{Z}$ for every one-dimensional representation $\pi : A \rightarrow \mathbb{C}^*$ of $A$ with $A^2 \subseteq {\rm{ker}}(\pi)$. Hence, if (ii) in (B) holds, then $\pi(S_1) \in \mathbb{Z}$ for every one-dimensional representation $\pi : A \rightarrow \mathbb{C}^*$ of $A$, that is, $S_1$ is integral. By Lemma \ref{lemma:ap}, we conclude that $S_1 \in \mathbb{B}(A)$. Conversely, if $S_1 \in \mathbb{B}(A)$, then $\pi(S_1) \in \mathbb{Z}$ for every one-dimensional representation $\pi : A \rightarrow \mathbb{C}^*$ of $A$, by Lemma \ref{lemma:ap}. In particular, for such $\pi$ also fulfilling $A^2 \not \subseteq {\rm{ker}}(\pi)$, we get that $\delta(\pi)$, which equals $2\pi(S_1)$ by \eqref{eq:cinq2}, is in $\mathbb{Z}$, thus ending the proof of (B).

\vspace{1.5mm}

\noindent
Proof of (C): Let $\pi : A \rightarrow \mathbb{C}^*$ be a one-dimensional representation of $A$ with $A^2 \not \subseteq {\rm{ker}}(\pi)$ and $y \in {\rm{ker}}(\pi)$. Then from \eqref{eq:cinq2}, we have 
$$\delta(\pi)^2 - 4 \epsilon(\pi) = 4 \pi(S_2) \pi(S_2^{-1}) - 4 (\pi(T_1) - \pi(T_1^{-1}))^2.$$
If there is $\alpha \in \mathbb{Z}$ with $\pi(S_2) \pi(S_2^{-1}) - (\pi(T_1) - \pi(T_1^{-1}))^2 = \alpha^2$, then it is clear that there is $\zeta \in \mathbb{Z}$ with $\delta(\pi)^2 - 4 \epsilon(\pi) = \zeta^2$. Conversely, assume there is $\zeta \in \mathbb{Z}$ with $\delta(\pi)^2 - 4 \epsilon(\pi) = \zeta^2$. Then 
$$\sqrt{\pi(S_2)\pi(S_2^{-1}) - (\pi(T_1) - \pi(T_1^{-1}))^2} \in \mathbb{Q}.$$ 
But this number is also integral over $\mathbb{Z}$, and hence, it is in $\mathbb{Z}$. This completes the proof of (C).

\vspace{1.5mm}

\noindent
Proof of (D): Let $\pi : A \rightarrow \mathbb{C}^*$ be a one-dimensional representation of $A$ with $A^2 \not \subseteq {\rm{ker}}(\pi)$ and $y \not \in {\rm{ker}}(\pi)$. Then, by \eqref{eq:zero} and \eqref{eq:cinq2}, we have
$$\delta(\pi)^2 - 4 \epsilon(\pi) = - 4 (\pi(T_1) - \pi(T_1^{-1}))^2 + 16 \pi(T_2) \pi(T_2^{-1}),$$
and it then remains to proceed as in the proof of (C) to get (D).
\end{proof}

If $G={\rm{Dih}}(A)$, then (3) is vacuous and the condition ``$y \in {\rm{ker}}(\pi)$" in (2) is automatic. Moreover, if $T = \emptyset$, one retrieves the characterization of integral undirected Cayley graphs over ${\rm{Dih}}(A)$ obtained in \cite[Theorem 3.1]{HL21}\footnote{Of course, via the statements of the proof of Corollary \ref{coro:s=-1}, we may also derive \cite[Theorem 3.1]{HL21} from Corollary \ref{coro:undirected}, and the same applies to the case of generalized dicyclic groups.}. On the other hand, the directed case is covered by the next statement:

\begin{corollary} \label{coro:1}
Let $A$ be a finite abelian group of exponent at least $3$ and let $S \subseteq A \setminus \{1\}$ be such that $s^{-1} \in S$ for no $s \in S$. \footnote{For a subset $S = T_1 \cup x T_2$ of ${\rm{Dih}}(A) \setminus \{1\}$ with $T_1, T_2 \subseteq A$ and $s^{-1} \in S$ for no $s \in S$, we necessarily have $T_2 = \emptyset$ (and hence, $S =T_1 \subseteq A$). Indeed, if there is $a \in T_2$, we have $(xa)^{-1} = a^{-1} x^{-1} = x^{-1} a = xa \in xT_2 \subseteq S$, which cannot hold.} Then the directed Cayley graph ${\rm{Cay}}({\rm{Dih}}(A),S)$ is integral if and only if $\pi(S) - \pi(S^{-1}) \in {\bf i}\mathbb{Z}$ for every one-dimensional representation $\pi : A \rightarrow \mathbb{C}^*$ of $A$ with $A^2 \not \subseteq {\rm{ker}}(\pi)$.
\end{corollary}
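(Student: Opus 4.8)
The plan is to deduce this statement from Corollary \ref{coro:s=-1}, applied in the case $f(a) = a^{-1}$ and $y = 1$ (for which $G = {\rm Dih}(A)$). First I would unwind the hypotheses. Since $S \subseteq A \setminus \{1\}$, we have $S \subseteq A$, so in the notation $T = T_1 \cup xT_2$ of Corollary \ref{coro:s=-1} we get $T_2 = \emptyset$ and $T_1 = S$; and since no $s \in S$ satisfies $s^{-1} \in S$, we have $T = S$ and $S \setminus T = \emptyset$, whence $S_1 = S_2 = \emptyset$. It then suffices to see which of the three conditions of Corollary \ref{coro:s=-1} survive.

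Conditions (1) and (3) are immediate. Indeed, $S_1 = \emptyset$ lies in $\mathbb{B}(A)$ (for instance because $\emptyset$ is integral, so that Lemma \ref{lemma:ap} applies, or simply as the complement of the subgroup $A$), so (1) holds; and (3) is vacuous, since $y = 1 \in {\rm ker}(\pi)$ for every one-dimensional representation $\pi$ of $A$. Hence, by Corollary \ref{coro:s=-1}, the integrality of ${\rm Cay}({\rm Dih}(A),S)$ is equivalent to condition (2) alone.

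It then remains to rewrite condition (2). Substituting $S_2 = \emptyset$ (so that $\pi(S_2) = \pi(S_2^{-1}) = 0$) and $T_1 = S$, condition (2) reads: for every one-dimensional representation $\pi : A \rightarrow \mathbb{C}^*$ with $A^2 \not\subseteq {\rm ker}(\pi)$, there exists $\alpha \in \mathbb{Z}$ with $-(\pi(S) - \pi(S^{-1}))^2 = \alpha^2$. Here the key point is that $\pi(s^{-1}) = \overline{\pi(s)}$ for every $s$ (the values of $\pi$ being roots of unity), so $\pi(S^{-1}) = \overline{\pi(S)}$ and hence $\pi(S) - \pi(S^{-1}) = {\bf i}\, r$ is purely imaginary with $r = 2\,{\rm Im}(\pi(S)) \in \mathbb{R}$, giving $-(\pi(S) - \pi(S^{-1}))^2 = r^2$. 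The last step, which is the only point requiring care, is the equivalence between ``there exists $\alpha \in \mathbb{Z}$ with $r^2 = \alpha^2$'' and ``$\pi(S) - \pi(S^{-1}) \in {\bf i}\mathbb{Z}$'': since $r$ is real, the former forces $r = \pm\alpha \in \mathbb{Z}$, i.e. ${\bf i}\, r \in {\bf i}\mathbb{Z}$, while conversely $r \in \mathbb{Z}$ lets one take $\alpha = r$. I do not anticipate any real difficulty: once the reduction to Corollary \ref{coro:s=-1} is in place, the whole argument collapses to this elementary observation.
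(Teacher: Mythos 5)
Your proposal is correct and follows exactly the route the paper intends: Corollary \ref{coro:1} is stated without separate proof as the specialization of Corollary \ref{coro:s=-1} to $y=1$ (so $G={\rm{Dih}}(A)$), with $S_1=S_2=T_2=\emptyset$ and $T_1=S$, where (1) holds trivially, (3) is vacuous, and (2) reduces via $\pi(S^{-1})=\overline{\pi(S)}$ to the purely imaginary condition $\pi(S)-\pi(S^{-1})\in{\bf i}\mathbb{Z}$. Your final equivalence between $r^2=\alpha^2$ for some $\alpha\in\mathbb{Z}$ and $r\in\mathbb{Z}$ is exactly the right (and only) point needing care, and you handle it correctly.
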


\begin{example} \label{ex:1.5}
As an illustration of our result, we now determine all integral directed Cayley graphs over the dihedral group with $2p$ elements, where $p$ is any given odd prime number. The following may be compared with \cite[Theorem 4.2]{LHH18}, which classifies all integral undirected Cayley graphs over such finite groups.

First, assume the exponent $e$ of $A$ is not divisible by 4. Then ${\rm{Cay}}({\rm{Dih}}(A),S)$ is integral if and only if $\pi(S) \in \mathbb{R}$ for every one-dimensional representation $\pi : A \rightarrow \mathbb{C}^*$ of $A$ with $A^2 \not \subseteq {\rm{ker}}(\pi)$. 

Indeed, fix such a representation $\pi$ of $A$. If $\pi(S) \in \mathbb{R}$, then $\pi(S) - \pi(S^{-1}) = 0 \in {\bf i} \mathbb{Z}$. Conversely, assume $\pi(S) - \pi(S^{-1}) \in {\bf i} \mathbb{Z}$. Then there exists $a \in \mathbb{Z}$ such that ${\bf i}a = \pi(S) - \pi(S^{-1})$. Note that $\pi(S) - \pi(S^{-1})$ is an element of the cyclotomic field $\mathbb{Q}(\zeta_e)$. Assume $a \not=0$, in which case we have ${\bf i} \in \mathbb{Q}(\zeta_e)$. If $e$ is odd, we get a contradiction (as $e$ and 4 are coprime) and, if $e$ is divisible by 2 (but not by 4), we get ${\bf i} \in \mathbb{Q}(\zeta_e) = \mathbb{Q}(\zeta_{e/2})$, which cannot hold either. Hence, $a=0$, i.e., $\pi(S) = \pi(S^{-1}) \in \mathbb{R}$.

Now, assume that $A$ is cyclic of prime order $p$ with $p$ odd. Fix a non-empty subset $S$ of $A$ with $1 \not \in S$ and $s^{-1} \in S$ for no $s \in S$, and assume ${\rm{Cay}}({\rm{Dih}}(A), S)$ is integral. Choosing a root of unity $\zeta$ of order $p$ and a generator $a$ of $A$, and considering the one-dimensional representation $\pi$ of $A$ defined by $\pi(a) = \zeta$, we must have $\pi(S) \in \mathbb{R}$. Setting $S = \{a^{i_1}, \dots, a^{i_n}\}$ with $n \geq 1$ and $i_j \in \{1, \dots, p-1\}$ for $j\in \{1, \dots, n\}$, we then have
$$\zeta^{i_1}  + \cdots + \zeta^{i_n} = \zeta^{-i_1}  + \cdots + \zeta^{-i_n} = \zeta^{p-i_1}  + \cdots + \zeta^{p-i_n}.$$
But $\zeta, \dots, \zeta^{p-1}$ are linearly independent over $\mathbb{Q}$ and, since $1 \leq p-i_j \leq p-1$ for $j\in \{1, \dots, n\}$, we get $\{i_1, \dots, i_n\} = \{p-i_1, \dots, p-i_n\}$, which contradicts the assumption that $s^{-1} \in S$ for no $s \in S$. Hence, the directed Cayley graph ${\rm{Cay}}({\rm{Dih}}(A), S)$ is not integral if $S \not= \emptyset$, and it is easily seen that the converse holds.

On the other hand, if the exponent of $A$ is divisible by 4, Corollary \ref{coro:1} yields examples of integral directed Cayley graphs over ${\rm{Dih}}(A)$. For instance, one easily obtains that every directed Cayley graph over the dihedral group with 8 elements is integral.
\end{example}

On the other hand, if $G={\rm{Dic}}(A,y)$ and $T = \emptyset$ in Corollary \ref{coro:s=-1}, one retrieves the characterization of integral undirected Cayley graphs over generalized dicyclic groups obtained in \cite[Theorem 3.1]{BL22}. The counterpart for directed Cayley graphs is given by the following statement:

\begin{corollary} \label{coro:2}
Let $A$ be a finite abelian group with even order and exponent at least $3$, and let $y \in A$ be of order 2. Let $S \subseteq {\rm{Dic}}(A,y) \setminus \{1\}$ be such that $s^{-1} \in S$ for no $s \in S$. Set $S= T_1 \cup xT_2$ with $T_1,T_2 \subseteq A$. Then the directed Cayley graph ${\rm{Cay}}({\rm{Dic}}(A,y),S)$ is integral if and only if the following two conditions hold:

\vspace{0.5mm}

\noindent
{\rm{(a)}} $\pi(T_1) - \pi(T_1^{-1}) \in {\bf i}\mathbb{Z}$ for every one-dimensional representation $\pi : A \rightarrow \mathbb{C}^*$ of $A$ with $A^2 \not \subseteq {\rm{ker}}(\pi)$ and $y \in {\rm{ker}}(\pi),$
 
\vspace{0.5mm}

\noindent
{\rm{(b)}} for every one-dimensional representation $\pi : A \rightarrow \mathbb{C}^*$ of $A$ with $A^2 \not \subseteq {\rm{ker}}(\pi)$ and $y \not  \in {\rm{ker}}(\pi)$, there exists $\alpha \in \mathbb{Z}$ such that $4\pi(T_2) \pi(T_2^{-1}) - (\pi(T_1) - \pi(T_1^{-1}))^2 = \alpha^2.$
\end{corollary}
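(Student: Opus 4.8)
The plan is to derive this statement directly from Corollary \ref{coro:s=-1}, of which it is essentially the directed specialization. Since $f(a) = a^{-1}$ for all $a \in A$ and $y$ has order $2$, the group $G = {\rm{Dic}}(A,y)$ falls under the scope of Corollary \ref{coro:s=-1}, so it suffices to translate its three conditions into the present setting. The key initial observation is that, because $s^{-1} \in S$ for no $s \in S$, we have $T = S$ and hence $S \setminus T = \emptyset$; in the notation of Corollary \ref{coro:s=-1} this forces $S_1 = S_2 = \emptyset$, while $T_1$ and $T_2$ are the very sets appearing in the present statement.

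With $S_1 = \emptyset$, condition (1) of Corollary \ref{coro:s=-1} reads $\emptyset \in \mathbb{B}(A)$, which holds trivially, so this condition imposes nothing. Condition (3) of Corollary \ref{coro:s=-1} is, verbatim, condition (b) of the present statement, so there is nothing to prove there. It then remains only to show that condition (2) of Corollary \ref{coro:s=-1}, once we substitute $\pi(S_2) = 0$ (valid since $S_2 = \emptyset$), is equivalent to condition (a).

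For this last point, I would fix a one-dimensional representation $\pi : A \to \mathbb{C}^*$ of $A$ with $A^2 \not\subseteq {\rm{ker}}(\pi)$ and $y \in {\rm{ker}}(\pi)$. After substituting $\pi(S_2) = 0$, condition (2) of Corollary \ref{coro:s=-1} becomes the requirement that $-(\pi(T_1) - \pi(T_1^{-1}))^2$ be of the form $\alpha^2$ for some $\alpha \in \mathbb{Z}$. Since $\pi$ takes values in roots of unity, $\pi(T_1^{-1}) = \overline{\pi(T_1)}$, so $\pi(T_1) - \pi(T_1^{-1})$ is purely imaginary; writing $\pi(T_1) - \pi(T_1^{-1}) = {\bf i}c$ with $c \in \mathbb{R}$, we get $-(\pi(T_1) - \pi(T_1^{-1}))^2 = c^2 \geq 0$. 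The condition then reads ``$c^2 = \alpha^2$ for some $\alpha \in \mathbb{Z}$'', which, $c$ being real, is equivalent to $c \in \mathbb{Z}$, that is, to $\pi(T_1) - \pi(T_1^{-1}) \in {\bf i}\mathbb{Z}$, which is precisely condition (a).

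I do not expect any genuine obstacle: the argument is a routine specialization of Corollary \ref{coro:s=-1} to the directed case. The only point demanding a little care is the final equivalence, where one must use that $\pi(T_1) - \pi(T_1^{-1})$ is purely imaginary in order to pass from ``$c^2$ is the square of an integer'' to ``$c$ is an integer''; this is exactly where the fact that $\pi$ is valued in roots of unity enters.
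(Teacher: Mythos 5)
Your proof is correct and matches the paper's intended derivation: the paper gives no separate proof of Corollary \ref{coro:2}, presenting it as the directed-case specialization of Corollary \ref{coro:s=-1} (where $T=S$ forces $S_1=S_2=\emptyset$, condition (1) becomes vacuous, and condition (3) becomes condition (b) verbatim), which is exactly what you carry out. Your final step --- writing $\pi(T_1)-\pi(T_1^{-1})={\bf i}c$ with $c\in\mathbb{R}$ purely imaginary, so that ``$c^2=\alpha^2$ for some $\alpha\in\mathbb{Z}$'' is equivalent to $c\in\mathbb{Z}$ --- correctly supplies the one detail the paper leaves implicit in passing from condition (2) to condition (a).
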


\begin{example} \label{ex:2}
As an illustration of our result, we determine all integral directed Cayley graphs over the dicyclic group with 8 elements. 

To that end, set $A = \mathbb{Z}/4\mathbb{Z}$ and let $a$ be a generator of $A$, in which case we necessarily have $a^2=y$. Let $S \subseteq {\rm{Dic}}(A,y) \setminus \{1\}$ be such that $s^{-1} \in S$ for no $s \in S$. Set $S= T_1 \cup xT_2$ with $T_1,T_2 \subseteq A$. Given $b \in A$, we have $(xb)^{-1} = x^{-1} b  = xyb$. Hence, as $s^{-1} \in S$ for no $s \in S$, we have 
$$T_1 \in \{\emptyset, \{a\}, \{a^3\}\} \quad {\rm{and}} \quad T_2 \in\{\emptyset, \{1\}, \{a\}, \{a^2\}, \{a^3\}, \{1,a\}, \{1,a^3\}, \{a, a^2\}, \{a^2, a^3\} \}.$$

Now, let $\pi : A \rightarrow \mathbb{C}^*$ be a one-dimensional representation of $A$ with $A^2 \not \subseteq {\rm{ker}}(\pi)$. Since $A^2 = \langle y \rangle$, we have $y \not \in {\rm{ker}}(\pi)$, and hence, Condition (a) in Corollary \ref{coro:2} is empty. We then have $\pi(y) = -1$, which yields $\pi(a) = \pm {\bf i}$. In particular, we have
$$
(\pi(T_1) - \pi (T_1^{-1}))^2 = \left\{
    \begin{array}{lll}
        0 & {\rm{if}} & T_1 = \emptyset \\
        -4 & {\rm{if}} & |T_1|=1
    \end{array}
\right.
\quad {\rm{and}} \quad
4 \pi(T_2) \pi (T_2^{-1}) = \left\{
    \begin{array}{lll}
        0 & {\rm{if}} & T_2 = \emptyset \\
        4 & {\rm{if}} & |T_2|=1 \\
 8 & {\rm{if}} & |T_2|=2 \\
    \end{array}
\right..
$$
In view of Condition (b) of Corollary \ref{coro:2}, we get that the directed Cayley graph ${\rm{Cay}}({\rm{Dic}}(A,y), S)$ is integral if and only if either $T_1 \in \{\{a\}, \{a^3\}\}$ and $T_2 = \emptyset$, or $T_1 = \emptyset$ and $T_2 \in\{\emptyset, \{1\}, \{a\}, \{a^2\}, \{a^3\}\}$.
\end{example}

\bibliography{Biblio2}
\bibliographystyle{alpha}

\end{document}